\documentclass[11pt,a4paper]{article}
\usepackage[top=2.5cm,bottom=2.5cm,left=2.2cm,right=2.2cm]{geometry}% Add by K
\usepackage[T1]{fontenc}
\usepackage[utf8]{inputenc}
\usepackage{color}
\usepackage{graphicx}
\usepackage{amsfonts}
\usepackage{extarrows}
\usepackage{amsmath,amsthm,amssymb,color}
\usepackage{hyperref}
\usepackage{eepic}
\usepackage{lineno}
\usepackage{enumerate}	
\usepackage{paralist}
\usepackage{cite}
\usepackage{algorithm}
\usepackage{algorithmicx}
\usepackage{algpseudocode}
\usepackage{authblk}%作者的单位出现在名字下方，且不影响脚注
\usepackage{mathtools}%让文中公式命令flalign,equation可以实现
\usepackage{pifont}%让公式可以用带圈的数字进行标号
\usepackage[numbers,sort&compress]{natbib}%使引用格式[1,2,3,4,5]变成[1-5]
\usepackage{comment}%隐去参考文献的宏包
\usepackage{tikz}
\usetikzlibrary{positioning}

\hypersetup
{
	colorlinks=true,
	linkcolor=blue,
	filecolor=blue,
	urlcolor=blue,
	citecolor=cyan,
}

\usepackage{etoolbox}

\newcommand{\cl}{{\rm cl}}

\newcommand{\co}{{\rm co}}
\newcommand{\Lp}{L^{-1}}
\newcommand{\mH}{\mathcal{H}}
\newtheorem{theorem}{Theorem}[section]

\newtheorem{conjecture}[theorem]{Conjecture}

\theoremstyle{definition}
\newtheorem{definition}[theorem]{Definition}

\AtBeginEnvironment{proof}{\setcounter{case}{0}}
{\setlength{\leftmargini}{1.5\parindent}
	\begin{itemize}
		\setlength{\itemsep}{-1.1mm}}
	{\end{itemize}}

\begin{document}
%\pagewiselinenumbers
\title{\bf Hamiltonian Properties of 3-Connected Claw-Free Graphs and Line Graphs of 3-Hypergraphs}
\author[1]{\bf Kenta Ozeki \footnote{Email: ozeki-kenta-xr@ynu.ac.jp.}}
\author[1,2]{\bf Leilei Zhang \footnote{Email: zhang-leilei-kn@ynu.ac.jp. (corresponding author)}}
	
\affil[1]{\footnotesize Faculty of Environment and Information Sciences, Yokohama National University, Yokohama 240-8501, Japan}
\affil[2]{\footnotesize School of Mathematics and Statistics, and Key Laboratory of Nonlinear Analysis and Applications (Ministry of Education), Central China Normal University, Wuhan 430079, China}
\date{}%{\today}
\maketitle
\begin{abstract}
Motivated by  Thomassen's well-known line graph conjecture, many researchers have explored sufficient conditions for claw-free graphs to be Hamiltonian or Hamilton-connected. In 1994, Ageev proved that every $2$-connected claw-free graph with domination number at most $2$ is Hamiltonian. In this paper, we extend this line of research to $3$-connected graphs by establishing the best possible upper bound on the domination number that guarantees Hamiltonicity. Specifically, we show that, except for some well-defined exceptional graphs, every $3$-connected claw-free graph $G$ with domination number 
%$\gamma(G) \leq 5$ 
at most $5$ is Hamiltonian. Furthermore, we prove that, apart from a few exceptional cases, every $3$-connected claw-free graph $G$ with domination number 
%$\gamma(G) \leq 4$ 
at most $4$ is Hamilton-connected, thereby generalizing earlier results of Zheng, Broersma, Wang and Zhang 
%$(\gamma(G)=1)$ 
and Vr\'ana, Zhan and Zhang.
%$(\gamma(G)\le 3)$. 
%In 2020, Li, Ozeki, Ryjáček, and Vrána conjectured that every 4-connected line graph of a 3-hypergraph is Hamilton-connected, and showed that this conjecture is equivalent to the  line graph conjecture of Thomassen. Motivated by this, 
We further investigate the Hamiltonian properties of line graphs of $3$-hypergraphs, and prove that every 3-connected line graph of a 3-hypergraph with domination number at most $4$ is Hamiltonian.

\smallskip
\noindent{\bf Keywords:} claw-free; domination number; Hamilton-connected; $3$-hypergraphs
		
\smallskip
\noindent{\bf AMS Subject Classification:} 05C45, 05C69, 05C38
\end{abstract}
	
%---------------------
\section{Introduction}
We generally follow the most common graph-theoretical notation and terminology, and for notations and concepts not defined here we refer the reader to Bondy and Murty \cite{BM}. Specifically, by a graph we always mean a simple finite undirected graph; whenever we admit multiple edges, we always speak about a multigraph. A {\it Hamilton cycle} (resp. {\it Hamilton path}) in a graph is a cycle (resp. path) passing through all vertices of the graph. If a graph has a Hamilton cycle, then we say the graph is Hamiltonian. A graph is called {\it Hamilton-connected} if between any two distinct vertices there is a Hamilton  path. The complete bipartite graph on $s$ and $t$ vertices is denoted by $K_{s,\, t}$. The graph $K_{1,3}$ is called the {\it claw}. A graph is called {\it claw-free} if it contains no induced subgraph isomorphic to the claw. A subset $X$ of vertices in a graph $G$ is called a {\it dominating set} if every vertex of $G$ is either contained in $X$ or adjacent to some vertex of $X$. The {\it domination number} of a graph $G$, denoted by $\gamma(G)$, is the size of a smallest dominating set of $G$. A subset $D \subseteq E(G)$ of edges in a graph $G$ is called an {\it  edge dominating set}  if every edge in $G$ is either in $D$ or adjacent (i.e., shares a common vertex) to some edge in $D$. The {\it edge domination number} of $G$ is the size of the smallest edge dominating set in $G$. For a positive integer \(k\), the notation \([k]\) represents the set of integers from \(1\) to \(k\). 

The line graph of a graph $H$, denoted by $L(H)$, is the graph $G$ with $V(G)=E(H)$, in which two vertices are adjacent if and only if the corresponding edges of $H$ have at least one vertex in common.  In 1986, Thomassen \cite{T} posed one of the most fascinating conjectures related to the Hamiltonicity of line graphs, as stated in Conjecture \ref{conj1} (i).  Note that every line graph is claw-free. A (seemingly) stronger version (Conjecture \ref{conj1} (ii)) was established by Matthews and Sumner \cite{MS}. Ryj\'{a}\v{c}ek~\cite{R} later showed that these two conjectures are in fact equivalent. Ku\v{z}el and Xiong~\cite{KX} strengthened Conjecture~\ref{conj1} (i), and Ryj\'{a}\v{c}ek and Vr\'{a}na~\cite{RP} strengthened Conjecture~\ref{conj1} (ii), both to Hamilton-connectedness. Finally, Ryj\'{a}\v{c}ek and Vr\'{a}na~\cite{RP} proved that all four statements in Conjecture~\ref{conj1} are mutually equivalent.

\begin{conjecture}\label{conj1}
{\rm(i) (Thomassen \cite{T})} Every $4$-connected line graph is Hamiltonian.
\newline\indent{\rm(ii) (Matthews and Sumner \cite{MS})} Every $4$-connected claw-free graph is Hamiltonian.
\newline\indent{\rm(iii) (Ku\v{z}el and Xiong \cite{KX})} Every $4$-connected line graph is Hamilton-connected.
\newline\indent{\rm(iv) (Ryj\'{a}\v{c}ek and Vr\'{a}na \cite{RP})} Every $4$-connected claw-free graph is Hamilton-connected.
\end{conjecture}

Researchers have approached the above conjectures from many different angles, including constraints on graph connectivity \cite{KV}, diameter \cite{MLLZ}, forbidden subgraphs \cite{LRVXY,LRVXY2}, and minimum degree conditions \cite{CLX}. In particular, in 1994, Ageev \cite{A} established the following sufficient condition for Hamiltonicity in claw-free graphs based on the domination number.

\begin{theorem}\label{Agvee}{\rm (Agvee \cite{A})}
Every $2$-connected claw-free graph with domination number at most $2$ is Hamiltonian.
\end{theorem}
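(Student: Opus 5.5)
The natural route is through Ryjáček's closure and the Harary--Nash-Williams correspondence. This reduces the statement to finding a dominating closed trail in a small preimage graph.

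First I replace $G$ by its closure $\cl(G)$. By Ryjáček's theorem, $\cl(G)$ is Hamiltonian if and only if $G$ is, and $\cl(G)=L(H)$ for a triangle-free simple graph $H$. Adding edges cannot increase the domination number, and it preserves $2$-connectivity, so $\cl(G)$ is a $2$-connected line graph with $\gamma(\cl(G))\le 2$. In $H$, a dominating set of $L(H)$ of size at most $2$ is an edge dominating set $\{e_1,e_2\}$. Equivalently, the set $S=V(e_1)\cup V(e_2)$ of at most four vertices is a vertex cover of $H$, so $H-S$ is edgeless. By Harary--Nash-Williams, it suffices to find a closed trail $T$ in $H$ that contains at least one end of every edge; the degenerate case where $H$ is a star, and $L(H)$ is complete, is trivial. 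Since $S$ covers all edges, it is enough that $T$ passes through every vertex of $S$ that has a neighbour outside $S$, and also dominates the edges inside $S$.

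Next I use the $2$-connectivity of $L(H)$, which says $H$ is essentially $2$-edge-connected. Every vertex of degree $\ge2$ in $H$ outside $S$ gives a path of length $2$ between two vertices of $S$, and pendant edges hang only on $S$. The key step is a case analysis on the multigraph $M$ with vertex set $S$, obtained by contracting each vertex $x\notin S$ of degree $\ge2$. A vertex of degree $k\ge 2$ gives a trail between two of its neighbours in $S$, and I record these as multi-edges, keeping the original edges $e_1,e_2$ and the other edges inside $S$. Pendant vertices are ignored, since they are dominated once their $S$-neighbour lies on $T$.

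I then need an eulerian connected subgraph of $M$ touching every vertex of $S$ that must be visited, and it must be liftable back to a closed trail in $H$. Because $|S|\le4$, $M$ is a connected multigraph on at most four vertices. Its essential $2$-edge-connectivity, inherited from that of $H$ (no cut-edge separating non-pendant parts), should make a spanning closed trail exist by direct inspection. The case $|S|=4$ with $e_1,e_2$ disjoint is the hard one. There I would check that the $2$-edge-connected reduced multigraph on four vertices, in which $e_1$ and $e_2$ are edges, contains a spanning eulerian subgraph. If some vertex of $S$ is attached only via a cut-edge, I would show that vertex can be skipped, because its other edges are pendant and dominated by the adjacent $S$-vertex. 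This skipping argument is also what lets me drop the edges inside $S$ from the requirement. An edge inside $S$ is dominated as soon as one of its ends is on $T$, and I would arrange that the chosen trail meets at least one end of each such edge, in particular of $e_1$ and $e_2$.

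The main obstacle is this last verification: handling vertices of $S$ whose only connection to the rest is through bridges or pendant structure, while still covering $e_1,e_2$. I expect that $2$-connectivity of $L(H)$ forces any bridge to have a pendant side, which resolves it.
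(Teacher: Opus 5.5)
The paper does not prove this statement; it is quoted from Ageev. The natural benchmark is therefore the paper's own method for its main results: closure, then Harary--Nash-Williams, then a closed trail through a small vertex set in a well-connected reduction of the preimage. Your first paragraph follows this pattern correctly.

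The gap is in the one step that carries all the content. You assert, ``by direct inspection'', that the reduced multigraph $M$ on $S$ has a spanning closed trail which lifts back to $H$. As you define $M$, it cannot support both halves of that claim. A vertex $x\notin S$ of degree $k\ge 3$ can be traversed by a trail at most $\lfloor k/2\rfloor$ times, each time through a pair of its edges. If you record $x$ as a single edge of $M$, you may destroy the $2$-edge-connectivity you want to invoke. If you record it as several edges, a closed trail in $M$ need not lift. Nor does $2$-edge-connectivity plus a small vertex cover suffice in general: the three degree-$2$ vertices of $K_{2,3}$ form a vertex cover, yet no closed trail passes through all of them. So something specific to the situation must be used, and you never use the one extra property you stated, namely that $H$ is triangle-free.

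By contrast, the bridge/pendant issue you single out as the main obstacle is immediate. Essential $2$-edge-connectivity says exactly that every bridge of $H$ is a pendant edge. Deleting all pendant edges therefore leaves a $2$-edge-connected graph $H_0$ (a single vertex if $H$ is a star). The non-leaf vertices $S'\subseteq S$ still form a vertex cover of $H$ and all lie in $H_0$.

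The missing idea is the following.
\begin{itemize}
\item Every $x\in V(H_0)\setminus S'$ has all its neighbours in $S'$, and at least two of them.
\item Any three vertices of $S'$ contain both ends of $e_1$ or both ends of $e_2$, so triangle-freeness leaves $x$ with exactly two neighbours.
\item Suppressing all such $x$ therefore gives a genuine loopless $2$-edge-connected multigraph $M$ on $|S'|\le 4$ vertices, and closed trails in $M$ now do lift to $H_0$.
\end{itemize}

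The inspection is then easy: a $2$-edge-connected multigraph on at most four vertices always has a spanning closed trail. Either its underlying simple graph is Hamiltonian, or that graph is $K_2$, $P_3$, $P_4$, $K_{1,3}$ or a triangle with a pendant edge. In the latter case every bridge of the simple graph is at least a double edge of $M$, so the doubled bridges, plus the triangle if present, form a spanning closed trail.

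In the case where $e_1=u_1v_1$ and $e_2=u_2v_2$ are disjoint you can even be explicit. The only edge of $M$ inside $V(e_i)$ is $e_i$, so each of the four vertices has an edge to the other pair. Hall's condition then gives a perfect matching among these cross edges, and together with $e_1,e_2$ it forms a $4$-cycle in $M$. Its lift is a closed trail through all of $S'$, which dominates every edge of $H$ because $S'$ is a vertex cover. No separate ``skipping'' argument is needed.
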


The conditions in this theorem (2-connected, claw-free, and the domination number bound) are all best possible, as shown in \cite{ZBWZ}. Motivated by this theorem, Cai \cite{C} and Zheng, Broersma, Wang and Zhang \cite{ZBWZ} relaxed the claw-free condition by imposing specific structural constraints on claws whenever they appear as induced subgraphs.  As a potential approach to Conjecture \ref{conj1}, several researchers have explored properties stronger than being Hamiltonian. For example, Zheng, Broersma, Wang and Zhang \cite{ZBWZ}  proved that every 3-connected claw-free graph with domination number 1 is Hamilton-connected. Building on this, they conjectured that the same holds for graphs with domination number 2. Recently, this conjecture was confirmed by Vr\'{a}na, Zhan and Zhang  \cite{VZZ}. In fact, they proved a stronger version of the conjecture, stated in the following theorem.

\begin{theorem}\label{VZZ1} {\rm  (Vr\'{a}na, Zhan and Zhang \cite{VZZ})}
	Every $3$-connected claw-free graph with domination number at most $3$ is Hamilton-connected.
\end{theorem}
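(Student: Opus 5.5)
The plan is the standard reduction of claw-free graphs to line graphs, followed by a structural analysis of the preimage multigraph.

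\emph{Reduction to line graphs.} Let $G$ be a $3$-connected claw-free graph with $\gamma(G)\le 3$. Take Ryj\'{a}\v{c}ek's closure $\cl(G)$, or, since we need Hamilton-connectedness, the strong $M$-closure of Ryj\'{a}\v{c}ek and Vr\'{a}na. This closure preserves Hamilton-connectedness in both directions, and it is a line graph $L(H)$ of a multigraph $H$. Adding edges keeps the graph $3$-connected and cannot increase the domination number, so $\cl(G)=L(H)$ is $3$-connected with $\gamma(L(H))\le 3$. It therefore suffices to prove the statement for line graphs of multigraphs.

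\emph{Translation into $H$.} A dominating set of $L(H)$ of size at most $3$ is an edge dominating set $D=\{e_1,e_2,e_3\}$ in $H$. Hence every edge of $H$ meets $S=V(D)$, where $|S|\le 6$, and $H-S$ is edgeless. The $3$-connectivity of $L(H)$ means that $H$ is essentially $3$-edge-connected. By the Harary--Nash-Williams type characterization used for Hamilton-connectedness of line graphs, it suffices to show the following. For any two edges $f_1,f_2$ of $H$, there is an $(f_1,f_2)$-trail, called an internally dominating trail, whose internal vertices cover all edges of $H$ except $f_1,f_2$. We may also pass to the core of $H$ after suppressing degree-$2$ vertices and removing pendant edges.

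\emph{Constructing the trail.} We build the trail using the structure that $V(H)\setminus S$ is independent. Every vertex outside $S$ has all its neighbours in $S$ and, by essential $3$-edge-connectivity, has degree at least $3$. Contract $S$-structure: consider the bipartite-like multigraph between $S$ and the independent set $I=V(H)\setminus S$, plus the edges inside $S$. The trail only needs to pass through enough vertices of $S$ so that every edge is incident to an internal vertex. Since every edge meets $S$, it is enough that the trail passes through all non-isolated vertices of $S$, or through an endpoint in $I$ of each edge whose $S$-end is skipped. With $|S|\le 6$ we find a trail from $f_1$ to $f_2$ that visits all of $S$ by routing through vertices of $I$. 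Each vertex of $I$ of degree at least $3$ gives a path of length $2$ between two $S$-vertices. Menger-type arguments (three edge-disjoint paths) then yield a connected spanning eulerian-ish subgraph on $S$ with the parity constraints needed to make $f_1,f_2$ the ends.

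\emph{Main obstacle.} The hard step is the case analysis when $f_1$ or $f_2$ lies at a vertex of $I$ or inside $S$ in awkward positions. Here the $3$-edge-connectivity must be used carefully to guarantee enough independent $S$--$S$ connections through $I$ that avoid reusing edges. I would attack this by a minimal counterexample argument: take $H$ with $|S|$ minimal, show that $e_1,e_2,e_3$ can be assumed vertex-disjoint (otherwise we reduce to a smaller case, or use the results for domination at most $2$ from \cite{VZZ}/\cite{ZBWZ}), and then exhaust the configurations of how $f_1,f_2$ attach to $S$. The bound of $3$ is exactly where six $S$-vertices still admit such a trail by counting degree-parity corrections.
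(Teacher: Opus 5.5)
\textbf{There is a genuine gap: the trail-construction step, which carries the whole theorem, is only asserted.}

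Your reduction is sound. It matches the framework the paper uses for its extension, Theorem~\ref{thm2}:
pass to $\cl^M(G)$ (Ryj\'{a}\v{c}ek's $\cl(G)$ would not do, since it does not preserve Hamilton-connectedness);
let $H=\Lp(\cl^M(G))$, which is essentially $3$-edge-connected;
and by Theorem~\ref{thm6}(ii) look for an $(f_1,f_2)$-IDT for every pair of edges, using that the at most six endpoints $S$ of three dominating edges meet every edge of $H$.

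The step \emph{Constructing the trail} has two concrete problems.

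First, vertices outside $S$ need not have degree at least $3$ in $H$. Essential $3$-edge-connectivity allows pendant edges and degree-$2$ vertices, which are exactly the modifications occurring in $\mathcal{P}'$ and $\mathcal{W}'$. Minimum degree $3$ holds only in $\co(H)$. Moving there requires replacing the dominating edges and $f_1,f_2$ by core edges as in Definition~\ref{Defi1}, which you do not do.

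Second, and more seriously, ``three edge-disjoint paths'' together with ``degree-parity corrections'' cannot produce a trail through a prescribed vertex set. $3$-edge-connectivity alone does not put prescribed vertices on a common closed trail. The Petersen graph is cubic and $3$-edge-connected, so its closed trails are cycles, and none covers all ten vertices. Moreover, the class $\mathcal{W}'$ shows the statement genuinely fails at domination number $4$, again because of the Petersen graph. A local parity count on $S$ cannot detect this global obstruction, so nothing in your sketch explains why $3$ works and $4$ does not. A minimal-counterexample case analysis would in effect have to reprove a nine-point-type theorem.

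\textbf{The missing idea} is to convert the two prescribed end edges into a closed-trail condition and then invoke such a theorem.
\begin{enumerate}
\item Work in $\co(H)$, which is $3$-edge-connected by Theorem~\ref{thm7}. Subdivide $f_1'$ and $f_2'$ (if they are distinct) and join the two new vertices by an edge $\widetilde{e}$. The resulting multigraph is still $3$-edge-connected.
\item Let $A$ be the set of endpoints of the three dominating core edges. A closed trail in this multigraph that contains $\widetilde{e}$ and passes through $A$ yields an $(f_1,f_2)$-IDT in $H$, after deleting $\widetilde{e}$ and undoing the reductions.
\item Theorem~\ref{thm8-1} (Liu, Ryj\'{a}\v{c}ek, Vr\'{a}na, Xiong and Yang, building on Bau--Holton) provides such a trail for every $8$-set $A$. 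The only exception is a contraction $\alpha$ onto the Petersen graph with $\alpha(\widetilde{e})=xy$ and $\alpha(A)=V(P)\setminus\{x,y\}$.
\item Since $|A|\le 6$, enlarge $A$ to eight vertices by adding both endpoints of $\widetilde{e}$ (and arbitrary further vertices if needed). These endpoints are mapped to $x$ and $y$, so the exceptional alternative cannot occur, and the required IDT exists.
\end{enumerate}
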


As an immediate consequence, we have the following result:

\begin{theorem}\label{VZZ2}{\rm  (Vr\'{a}na, Zhan and Zhang \cite{VZZ})}
	Every $3$-connected claw-free graph with domination number at most $3$ is Hamiltonian.
\end{theorem}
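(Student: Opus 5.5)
The statement is Theorem~\ref{VZZ2}, which the paper presents as an immediate consequence of Theorem~\ref{VZZ1}. The plan is therefore a direct reduction: a Hamilton-connected graph on at least three vertices is Hamiltonian. Let $G$ be a $3$-connected claw-free graph with $\gamma(G)\le 3$. By Theorem~\ref{VZZ1}, $G$ is Hamilton-connected.

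First we check that $G$ has enough vertices. Being $3$-connected, $G$ has at least four vertices (by the usual convention, $K_4$ being the smallest $3$-connected graph). In particular, $G$ has an edge $uv$; indeed every vertex has degree at least $3$.

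Next we build the cycle. Hamilton-connectedness gives a Hamilton path $P$ from $u$ to $v$. Since $|V(G)|\ge 4$, the path $P$ has at least three vertices, so $uv$ is not an edge of $P$. Hence $P+uv$ is a cycle through every vertex of $G$, that is, a Hamilton cycle, and $G$ is Hamiltonian.

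There is no real obstacle here. The only point needing care is the degenerate case, ensuring that $G$ has an edge and at least three vertices so that closing the path yields a genuine cycle. This follows at once from $3$-connectivity.
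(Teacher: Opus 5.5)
Your proposal is correct and takes the same route as the paper. The paper gives no separate argument and records this theorem as an immediate consequence of Theorem~\ref{VZZ1}. The step left implicit there is exactly yours: a Hamilton path between the ends of an edge $uv$, in a graph on at least four vertices, closes up into a Hamilton cycle.
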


In this paper, 
%we further investigate the Hamiltonicity of $3$-connected claw-free graphs by studying the relationship between their domination number and Hamiltonicity. We establish the best possible upper bound on the domination number that guarantees Hamiltonicity in this class of graphs.
we extend Theorems \ref{VZZ1} and \ref{VZZ2} in several directions;
characterizing the extremal graphs
in Theorem \ref{VZZ2} (see Theorem \ref{thm1})
and those in Theorem \ref{VZZ1} (see Theorem \ref{thm2}),
and also extending Theorem \ref{VZZ2} to 
$3$-connected line graph of a $3$-hypergraph (see Theorem \ref{thm3}).

For our results, we will need some more terminology. We denote the neighborhood of a vertex $x \in V(G)$ in a graph $G$ by $N_G(x)$, which is the set of all vertices adjacent to $x$. If $X \subseteq V(G)$, then $N_G(X)$ is defined as the set of all vertices in $V(G) \setminus X$ that have a neighbor in $X$.  We say that a vertex $x\in V(G)$ is {\it eligible} if $G[N_G(x)]$ is a connected noncomplete graph. The closure $\cl(G)$ of a claw-free graph $G$ was introduced by Ryj\'{a}\v{c}ek \cite{R}, who defined it as the graph obtained by recursively applying local completion at eligible vertices. In the same paper, Ryj\'{a}\v{c}ek also proved that the closure $\cl(G)$ is uniquely determined and that $G$ is Hamiltonian if and only if $\cl(G)$ is Hamiltonian. This result established closure as a powerful tool in the study of Hamiltonicity of claw-free graphs.

We also recall two well-known graphs that will occur as exceptions in our results, namely, the Petersen graph $P$ and the Wagner graph $W$ (see Fig \ref{fig1}). It is well known that the Wagner graph can be obtained from the Petersen graph by deleting an arbitrary edge and then suppressing the two resulting vertices of degree $2$—that is, by replacing  vertex of degree $2$ and its incident edges with a single edge connecting its neighbors. An edge is said to be {\em pendant} if one of its endvertices has degree $1$. 

\begin{figure}[!ht]
	\centering
	\includegraphics[width=90mm]{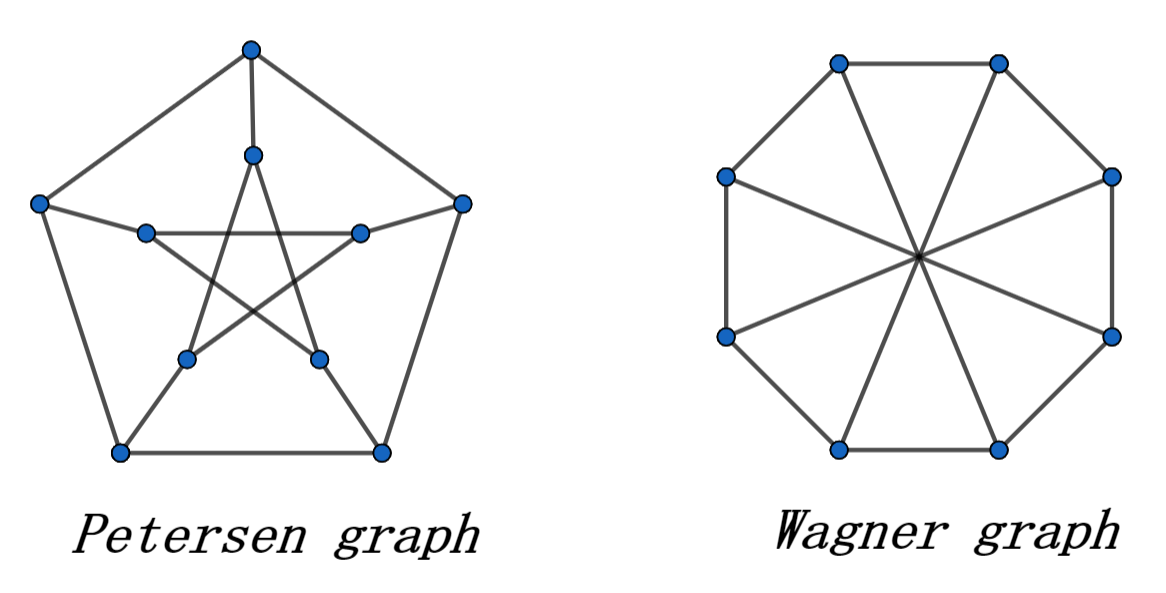}
	\caption{The Petersen graph $P$ and the Wagner graph $W$.}
	\label{fig1}
\end{figure}

Let $\mathcal{P}'$ denote the class of graphs whose edge domination number is at most $5$
and that are obtained from the Petersen graph by modifying each vertex in one of the following two ways: (1) attaching at least one pendant edge to the vertex, or (2) subdividing some edges incident to the vertex.
%such that the resulting graph has edge domination number at most $5$. 
The first main contribution of this paper is the following theorem, which extends Theorem~\ref{VZZ2} and gives the best possible upper bound on the domination number ensuring Hamiltonicity in $3$-connected claw-free graphs.

\begin{theorem}\label{thm1}
Let $G$ be a $3$-connected claw-free graph of order $n$. If the domination number of $G$ is at most $5$, then $G$ is Hamiltonian unless its closure $\mathrm{cl}(G)$ is the line graph of a graph belonging to the class $\mathcal{P}'$.
\end{theorem}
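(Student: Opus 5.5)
We will use the standard reduction to line graphs and then to dominating closed trails. By Ryjáček's closure theorem, $G$ is Hamiltonian if and only if $\cl(G)$ is. Moreover, $\cl(G)$ is the line graph of a triangle-free graph $H$. Closure only adds edges, so $\cl(G)$ is still $3$-connected, and $\gamma(\cl(G))\le\gamma(G)\le 5$. A dominating set of $L(H)$ is exactly an edge dominating set of $H$, so $H$ has edge domination number at most $5$. By the Harary--Nash-Williams theorem, $L(H)$ is Hamiltonian if and only if $H$ has a dominating closed trail, i.e.\ an eulerian subgraph $D$ such that every edge of $H$ has an endvertex on $D$. So the theorem reduces to the following. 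If $H$ is triangle-free, $L(H)$ is $3$-connected (so $H$ is essentially $3$-edge-connected), and $H$ has an edge dominating set of size at most $5$, then $H$ has a dominating closed trail unless $H\in\mathcal P'$.

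Next we reduce to a core. Take the core $H_0$ of $H$: delete all pendant vertices and suppress all degree-$2$ vertices. Essential $3$-edge-connectivity makes $H_0$ a $3$-edge-connected multigraph. By the standard lemma (Shao / Catlin), if $H_0$ has a spanning eulerian subgraph, then $H$ has a dominating closed trail. An edge dominating set $F$ of $H$ with $|F|\le 5$ covers at most $10$ vertices. Every vertex of degree at least $3$ in $H$ must lie in $V(F)$ or be adjacent to one. We want to turn this into a bound on $|V(H_0)|$, or more precisely on $|V(H_0)|$ after contracting collapsible subgraphs (Catlin's reduction). The aim is to show that the Catlin reduction $H_0'$ of $H_0$ has at most about $10$ vertices and minimum degree at least $3$. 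The counting works as follows. Each edge of $F$ sits in $H_0$ either as an edge or inside a suppressed path. Each vertex of $H_0$ is an endvertex of some edge of $F$, or adjacent in $H$ to one. Non-dominating branch vertices would contradict domination, since their incident edges would be undominated unless they are subdivided paths touching $F$. For the bound, we use the known result (Chen et al.) that every $3$-edge-connected graph with at most $11$ vertices is supereulerian unless its reduction is the Petersen graph.

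For the case analysis, we first apply that result to $H_0'$. If $H_0'$ is not the Petersen graph $P$, then $H_0'$ is supereulerian, hence so is $H_0$, and $G$ is Hamiltonian. If $H_0'=P$, then $H_0$ itself must be $P$: any nontrivial collapsible piece can be used to reroute and produce a spanning closed trail, which has to be checked. Then $H$ is obtained from $P$ by attaching pendant edges at vertices and subdividing edges. We must still decide when a dominating closed trail exists in such an $H$. Such a trail exists exactly when some eulerian subgraph of $P$ (a cycle, or a union of disjoint cycles, since $P$ is cubic) hits every vertex carrying a pendant edge and every subdivided edge at one of its ends. The defining condition of $\mathcal P'$ should correspond to every vertex of $P$ being "forced", i.e.\ carrying a pendant edge or an incident subdivided edge. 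Since $P$ is not Hamiltonian and has no $2$-factor other than two $5$-cycles, which are not dominating of these modifications, no dominating trail exists. Conversely, if some vertex $v$ is unmodified, a $9$-cycle of $P-v$ dominates.

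The main obstacle is the counting step. We must show that edge domination number at most $5$ forces $|V(H_0')|\le 11$, which is the threshold for the Petersen exception. The worry is that subdivided paths and pendant edges can hide degree-$3$ vertices that are dominated only through the suppressed structure. I would first try a charging argument. Each edge $e\in F$ is charged to the at most two vertices of $H_0$ containing its endpoints, plus the $H_0$-neighbours reachable through short subdivided paths. I would then show the total charge is at most $10$, using triangle-freeness and minimum degree $3$ in $H_0$ to rule out denser configurations.
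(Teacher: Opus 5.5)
\paragraph{Verdict: genuine gap.} Your reduction to $H=\Lp(\cl(G))$ and to the core is fine, and so is your final analysis once the core equals $P$; both agree with the paper. The gap is the middle step, and the problem is not only that it is unproven: the target itself is wrong. You try to show that the core $H_0$ (or its Catlin reduction $H_0'$) is supereulerian unless it is $P$, using a bound $|V(H_0')|\le 11$.

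\paragraph{Counterexample to the middle step.} Let $v$ be a vertex of $P$ with neighbours $a,b,c$. Replace $v$ by a copy of $K_{2,3}$ with parts $\{s,t\}$ and $\{p,q,r\}$, and add the edges $pa,qb,rc$. Call the result $H$.
\begin{itemize}
\item $H$ is cubic, triangle-free and $3$-edge-connected on $14$ vertices. Hence $H=\co(H)$, and $L(H)$ is $3$-connected and closed.
\item Its edge domination number is at most $5$. Take $ps,qt,rc$ together with the two edges of $P-\{v,c\}$ that, with $vc$, form the perfect matching induced on the complement of a maximum independent set of $P$ avoiding $v$ and $c$; such a set exists.
\item $H\notin\mathcal{P}'$.
\item $H$ contracts onto $P$, so it is not supereulerian. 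Therefore, whatever $H_0'$ is, one of your two claims fails: either ``$H_0'\neq P\Rightarrow$ supereulerian'' or ``$H_0'=P\Rightarrow H_0=P$''.
\item Nevertheless $L(H)$ is Hamiltonian. A Hamilton cycle of $P-b$ must pass through $a\,v\,c$. Replacing $v$ by $a\,p\,s\,q\,t\,r\,c$ gives a cycle of $H$ that misses only $b$, and all neighbours of $b$ lie on it, so it is a dominating closed trail.
\end{itemize}
So no charging argument can close the gap. The core need not have at most $11$ vertices, and it can fail to be supereulerian without being $P$. Spanning closed trails are simply the wrong object here.

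\paragraph{The missing idea.} Ask only for a closed trail through a prescribed vertex set.
\begin{itemize}
\item Map the five dominating edges to core edges $e_i'$, and let $z_1,\dots,z_{10}$ be their endpoints. These vertices form a vertex cover of $\co(H)$. Hence any closed trail through them is dominating, and it lifts to a dominating closed trail of $H$.
\item Theorem~\ref{thm8} gives such a trail with no bound on $|V(\co(H))|$. It guarantees a closed trail through any $|A|\le 12$ vertices of a $3$-edge-connected multigraph, unless there is a Petersen contraction in which every preimage meets $A$.
\item In the exceptional case, each of the ten preimages $H_i$ contains exactly one $z_i$. Every edge inside $H_i$ must then be dominated by $z_i$.
\item Minimum degree $3$ together with $|N_{\co(H)}(H_i)|=3$ then forces $H_i=\{z_i\}$. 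Hence $\co(H)=P$, and your last paragraph finishes the proof.
\end{itemize}
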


Motivated by Theorem~\ref{VZZ1} and the work of Zheng, Broersma, Wang, and Zhang~\cite{ZBWZ}, we further investigate the relationship between the domination number and Hamilton-connectedness in claw-free graphs. In this paper, we prove that, with the exception of certain explicitly characterized graphs, every $3$-connected claw-free graph with domination number at most $4$ is Hamilton-connected. This result extends Theorem~\ref{VZZ1} and determines the best possible upper bound on the domination number that guarantees Hamilton-connectedness in $3$-connected claw-free graphs. 

We need some notation to introduce our next theorem.
Let $\mathcal{W}'$ denote the class of graphs 
whose edge domination number is at most $4$
and that are obtained from the Wagner graph by performing, at each vertex $v$, one or more of the following operations: attaching at least one pendant edge or a double edge (i.e., an edge of multiplicity 2); subdividing some edges incident to $v$; and optionally adding a double edge between $v$ and any subdivision vertex. 
%The resulting graph is required to have edge domination number at most $4$.
As noted in \cite{BFR}, the closure operation does not preserve the (non-)Hamilton-connectedness of $G$. To address this problem, Ryj\'{a}\v{c}ek and Vr\'ana \cite{RP} introduced a strengthened version of the closure operation, known as the \textit{$\textbf{M}$-closure}, denote by $\cl^M(G)$.  In this paper, we do not require the explicit construction of $\cl^M(G)$; instead, we only rely on some of its properties, which will be listed later.

\begin{theorem}\label{thm2}
 Let $G$ be a $3$-connected claw-free graph with order $n$. If the domination number of $G$ is at most $4$, then $G$ is Hamilton-connected, unless its $\textbf{M}$-closure is the line graph of a graph belonging to the class $\mathcal{W}'$.
\end{theorem}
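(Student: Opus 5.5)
We reduce Theorem~\ref{thm2} to a statement about multigraphs via the $\mathbf{M}$-closure. By the Ryj\'a\v{c}ek--Vr\'ana theory we use four properties. First, $G$ is Hamilton-connected if and only if $\cl^M(G)$ is. Second, $\cl^M(G)$ is the line graph of a multigraph $H$. Third, $H$ can be chosen so that $\cl^M(G)=L(H)$ is $3$-connected, which means $H$ is essentially $3$-edge-connected. Fourth, domination does not increase under adding edges, so $\gamma(\cl^M(G))\le\gamma(G)\le 4$.

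A dominating set of $L(H)$ of size $k$ is an edge dominating set of $H$ of size $k$. So it suffices to prove the following. \emph{Every essentially $3$-edge-connected multigraph $H$ (of the form given by the $\mathbf{M}$-closure) with edge domination number at most $4$ has an $(e_1,e_2)$-internally-dominating trail (IDT) for every pair of edges $e_1,e_2$, unless $H\in\mathcal{W}'$.} By the classical Harary--Nash-Williams type correspondence, IDTs between any two edges correspond to Hamilton paths between any two vertices of $L(H)$.

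Let $D=\{f_1,\dots,f_k\}$, $k\le 4$, be an edge dominating set of $H$. Its endpoints form a vertex cover $S$ with $|S|\le 8$, so $V(H)\setminus S$ is independent. Form the core $H_0$ by removing pendant edges and suppressing degree-2 vertices, which keeps track of the multiplicities. Every vertex outside $S$ has all neighbours in $S$, so after these reductions $H_0$ is a $3$-edge-connected multigraph whose vertex set is essentially $S$. For $|S|\le 6$ this is the regime already handled by Theorem~\ref{VZZ1}. So the new case is $k=4$ with $|S|=8$, where $D$ is a perfect matching of $H_0$ on $8$ vertices.

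We then use the standard tool: an $(e_1,e_2)$-IDT in $H$ exists once the core has a spanning $(e_1,e_2)$-trail, up to the usual lifting of $e_1,e_2$. Such trails exist whenever the core has two edge-disjoint spanning trees or is collapsible-like. Catlin-type reduction handles everything except small $3$-edge-connected cubic-like graphs on at most $8$ vertices. We would enumerate these using the perfect matching $D$: the $3$-edge-connected cubic graphs on $8$ vertices, together with their multigraph relatives. For each one we check whether every pair of edges admits a spanning trail, using the extra edges supplied by the multiplicities where needed.

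The main obstacle is this case analysis. We expect every such graph except the Wagner graph $W$ to admit all required trails; in $W$ some pair of edges fails. The last step is to lift the failure back to $H$. Attaching pendant or double edges and subdividing with double edges are exactly the operations that preserve both the obstruction and the bound of $4$ on the edge domination number, and this yields the class $\mathcal{W}'$. Conversely, any other modification either raises the edge domination number above $4$ or creates a dominating structure that repairs the missing trail.
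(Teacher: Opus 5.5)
Your reduction fails at the claim that the core $H_0$ has vertex set essentially $S$, which is what turns the problem into an enumeration of cubic-like graphs on $8$ vertices. Independence of $V(H)\setminus S$ only says that vertices outside $S$ have all their neighbours in $S$. Any such vertex of degree at least $3$ is neither a leaf nor a degree-$2$ vertex, so it survives into the core, and nothing bounds how many there are.

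Here is a concrete instance. Let $H$ be the Petersen graph with one pendant edge attached at every vertex except two nonadjacent vertices $u,u'$.
\begin{itemize}
\item $H$ is simple, triangle-free and essentially $3$-edge-connected. So $L(H)$ is a $3$-connected claw-free graph, and it is $\mathbf{M}$-closed by Theorem~\ref{thm10}.
\item Every edge dominating set must have all eight pendant-carrying vertices among its endpoints. Since $P-\{u,u'\}$ has a perfect matching, $\gamma(L(H))=4$.
\item Yet $\co(H)$ is the whole Petersen graph on $10$ vertices. The Petersen graph is already reduced in Catlin's sense, so a ``Catlin-type reduction'' does not shrink it to a cubic graph on $8$ vertices either.
\end{itemize}
Your enumeration never looks at this instance, so the argument says nothing about it. A spanning $(e_1',e_2')$-trail of the core is also the wrong target. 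Only the (at most) eight endpoints of $D$ need to be interior vertices of the trail; in this example $u$ and $u'$ need not be visited at all.

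What is missing is a tool that works for $3$-edge-connected multigraphs of unbounded order and only asks the trail to pass through prescribed vertices. The paper uses Theorem~\ref{thm8-1} (Liu, Ryj\'a\v{c}ek, Vr\'ana, Xiong and Yang, building on Bau and Holton), as follows.
\begin{itemize}
\item It forms $\co(H)'$ by subdividing $e_1',e_2'$ and joining the two new vertices by an edge $\widetilde e$; this keeps $3$-edge-connectivity.
\item It takes $A$ to be the eight endpoints of $f_1',\dots,f_4'$.
\item Either there is a closed trail through $A$ using $\widetilde e$, which gives an $(e_1,e_2)$-IDT in $H$, or there is a contraction $\co(H)'\to P$ with $\widetilde e\mapsto xy$ and $A\mapsto V(P)\setminus\{x,y\}$.
\item In the second case, deleting $xy$ and suppressing yields the Wagner graph. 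The domination condition then forces every preimage to be a star, which is where $\mathcal{W}'$ comes from.
\end{itemize}
Some theorem of this prescribed-vertex type is indispensable; a finite case check cannot replace it.

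A smaller point: handing the case $|S|\le 6$ to Theorem~\ref{VZZ1} needs justification. Four dominating edges with overlapping ends do not by themselves give three dominating edges. You can fix this by first choosing $D$ to be a matching, which is always possible for a minimum edge dominating set.
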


A {\it hypergraph} $\mH$ is an ordered pair $(V(\mH),E(\mH))$, where $V(\mH)$ is the vertex set of $\mH$ and $E(\mH)$ is a collection of not necessarily distinct nonempty subsets of $V(\mH)$, called {\it hyperedges} of $\mH$. The {\it rank} of a hypergraph $\mH$ is the maximum cardinality of a hyperedge of $\mH$. A hypergraph of rank $r$ is also referred to as an {\it $r$-hypergraph}. Note that a $2$-hypergraph corresponds to a  multigraph.  A hyperedge of cardinality 2 will be sometimes also called an edge of $\mH$. Thus, a hypergraph without hyperedges is a multigraph, and a multigraph without parallel edges and without loops is a graph.

In 2020, Li, Ozeki, Ryj\'{a}\v{c}ek and Vr\'{a}na \cite{LORV} found a close relation between line graphs of 3-hypergraphs and Conjecture \ref{conj1}.

%\begin{conjecture}{\rm (Li, Ozeki, Ryj\'{a}\v{c}ek and Vr\'{a}na \cite{LORV})}
%Every $4$-connected line graph of a $3$-hypergraph is Hamilton-connected.
%\end{conjecture}

\begin{theorem}\label{thm3}
{\rm (Li, Ozeki, Ryj\'{a}\v{c}ek and Vr\'{a}na \cite{LORV})}
Conjecture~\ref{conj1} is equivalent to the statement that
every $4$-connected line graph of a $3$-hypergraph is Hamilton-connected.
\end{theorem}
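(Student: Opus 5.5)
The equivalence has two directions. One is immediate. The other needs a reduction from hypergraphs to graphs, and that reduction is where the work lies.

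\emph{From the hypergraph statement to Conjecture~\ref{conj1}.} A graph is a hypergraph of rank at most $2$, hence in particular a $3$-hypergraph. Its line graph in the hypergraph sense coincides with its ordinary line graph. So if every $4$-connected line graph of a $3$-hypergraph is Hamilton-connected, then every $4$-connected line graph is Hamilton-connected. This is Conjecture~\ref{conj1}~(iii). By the result of Ryj\'a\v{c}ek and Vr\'ana~\cite{RP} that all four statements of Conjecture~\ref{conj1} are equivalent, this direction is done.

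\emph{From Conjecture~\ref{conj1} to the hypergraph statement.} Assume Conjecture~\ref{conj1}~(iii), and let $G=L(\mH)$ be $4$-connected with $\mH$ a $3$-hypergraph. $G$ need not be claw-free, since a $3$-hyperedge can meet three pairwise disjoint hyperedges. I would therefore pass to an auxiliary multigraph $H$, built as follows:
\begin{itemize}
\item replace every hyperedge $e=\{a,b,c\}$ of size $3$ by a new vertex $w_e$ and the three edges $w_ea,w_eb,w_ec$;
\item keep the hyperedges of size at most $2$;
\item handle the degenerate cases (repeated hyperedges, size $1$) by small local gadgets, e.g.\ pendant edges.
\end{itemize}
Then $L(H)$ is a line graph in which $e$ is replaced by the triangle $T_e$ on $w_ea,w_eb,w_ec$.

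The first step is to show that $L(H)$ is $4$-connected. Vertex cuts of a line graph correspond to essential edge cuts of the root graph. The only new small edge cut in $H$ is the star at $w_e$, and it is not essential because $w_e$ carries no further edges. Every other essential edge cut of $H$ projects to a set of at most as many hyperedges separating $G$, so the $4$-connectedness of $G$ transfers.

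The second step is a Harary--Nash-Williams type correspondence. A Hamilton path of $L(H)$ between edges $f_1,f_2$ corresponds to an internally dominating $(f_1,f_2)$-trail in $H$. Since $w_e$ has degree exactly $3$, such a trail meets $w_e$ at most once internally, using two of its three edges, and the third edge is dominated at $w_e$. Contracting each star back to its hyperedge, the trail becomes a trail-like system in $\mH$ that passes through each $3$-hyperedge at most once and dominates all the others. Reading off the hyperedges in the order they are first met, with dominated hyperedges inserted next to a hyperedge they meet, gives a Hamilton path of $G$.

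For the endpoints: if a prescribed endpoint of $G$ is a $3$-hyperedge $e$, I would take $f_i$ to be one of the edges $w_ea$. I would then check separately the case where the trail starts at $w_e$ and later returns through the other two edges.

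I expect the main obstacle to be this translation back from $L(H)$ to $G$. One must check that the three vertices of each $T_e$, which a Hamilton path of $L(H)$ need not visit consecutively, can always be merged into the single vertex $e$ of $G$ without breaking the path. I would argue this at the level of dominating trails in $H$, using $\deg_H(w_e)=3$, rather than directly on paths in $L(H)$. If that fails for the endpoint cases, I would fall back on proving $L(H)$ is $4$-connected and claw-free after a local modification of the gadget, and invoking Conjecture~\ref{conj1}~(iv) together with properties of the $\textbf{M}$-closure.
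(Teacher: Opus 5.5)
The direction from the hypergraph statement to Conjecture~\ref{conj1} is fine. The gap is in the other direction, at your first step: $L(H)$ need not be $4$-connected, so Conjecture~\ref{conj1}(iii) cannot be applied to it.

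Your claim that every essential cut $R$ of $H$ other than a star projects to a set $X$ of hyperedges separating $L(\mH)$ is false. It fails whenever one side of $R$ consists only of pieces $w_gx$ of $3$-hyperedges $g$ that also have a piece in $R$. All those pieces collapse to vertices of $X$ itself, so $X$ separates nothing. Here is a concrete example:
\begin{itemize}
\item Take $K_m$ with $m\ge 5$ on a vertex set containing $c,d,d'$.
\item Add two new vertices $a,b$ and the hyperedges $g=\{a,b,c\}$, $\{a,d\}$ and $\{b,d'\}$.
\item Then $L(\mH)$ is $4$-connected: each of the three new vertices has at least $m-1$ neighbours in the $(2m-4)$-connected graph $L(K_m)$.
\item However, $\{w_gc,ad,bd'\}$ is an essential $3$-edge-cut of $H$ that cuts off the two edges $w_ga,w_gb$. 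Hence $L(H)$ has a $3$-vertex-cut.
\end{itemize}
If some vertex $a$ lies in no hyperedge other than $e$, things are even worse: $\deg_{L(H)}(w_ea)=2$. The obvious repair is to double each edge $w_ex$. That restores connectivity, but then a trail can pass through $w_e$ three times between different pairs of $a,b,c$, and your pull-back to $L(\mH)$ no longer works.

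Second, even when $L(H)$ happens to be $4$-connected, your pull-back is justified only for Hamilton cycles. For cycles it does work: a visit to $T_e$ through a single vertex $w_ea$ has both outer neighbours containing $a$, so it can be short-cut, and one remaining visit is mapped to $e$.

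For Hamilton-connectedness with a prescribed end at $e=\{a,b,c\}$, nothing prevents the path supplied by the conjecture from having the form $w_ea,z,\dots,x,w_eb,w_ec,y,\dots$ with $z\ni a$, $x\ni b$, $y\ni c$. Its image in $L(\mH)$ starts at $e$ and revisits $e$ between $x$ and $y$. Neither visit can be removed locally: $x$ and $y$ need not be adjacent, and the first visit is the prescribed end. You also cannot choose which Hamilton path the conjecture gives you.

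You name this as the main obstacle but give no argument. The fallback you offer (an unspecified modification of the gadget, plus Conjecture~\ref{conj1}(iv) and the \textbf{M}-closure) is not a proof.

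The missing ingredient is a construction of an auxiliary $4$-connected line graph, or claw-free graph, from $L(\mH)$ whose Hamilton paths between arbitrary prescribed ends provably pull back to Hamilton paths of $L(\mH)$. The paper itself gives no proof of this statement; it only quotes the theorem of Li, Ozeki, Ryj\'a\v{c}ek and Vr\'ana. So this reduction is exactly the part you would have to supply.
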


%They showed that this conjecture is equivalent to Thomassen's well-known line graph conjecture (Conjecture~\ref{conj1}). 

Motivated by this theorem, we further investigate the domination number of the line graph of a 3-hypergraph, and we establish the following result

\begin{theorem}\label{thm3}
	Every $3$-connected line graph of a  $3$-hypergraph with domination number at most $4$ is Hamiltonian.
\end{theorem}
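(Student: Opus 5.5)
We reduce the statement to Theorem~\ref{thm1}; the work is to rule out the Petersen-type exception.

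\textbf{Setup.} Let $G=L(\mH)$ be $3$-connected with $\gamma(G)\le 4$, where $\mH$ is a $3$-hypergraph. Every line graph of a $3$-hypergraph is claw-free. Indeed, a vertex of $G$ is a hyperedge $e$ with $|e|\le 3$. Any three neighbours of $e$ each meet $e$, so by pigeonhole either two of them share a vertex of $e$, or each contains a distinct vertex of $e$. Pigeonhole gives adjacency when $|e|\le 2$. When $|e|=3$ with three distinct vertices, the claw is not excluded, so this step needs care; we handle it via the closure below.

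\textbf{Application of Theorem~\ref{thm1}.} Since $\gamma(G)\le 4\le 5$, Theorem~\ref{thm1} applies. Thus $G$ is Hamiltonian unless $\cl(G)=L(F)$ for some $F\in\mathcal{P}'$. So assume, for a contradiction, that $\cl(G)=L(F)$ with $F\in\mathcal{P}'$.

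\textbf{Step 1: $\gamma(L(F))\ge 5$ for every $F\in\mathcal P'$.} The closure only adds edges, so $\gamma(\cl(G))\le\gamma(G)\le 4$. Hence the edge domination number of $F$ would be at most $4$. We show instead that every $F\in\mathcal P'$ has edge domination number $5$, that is, no four edges of $F$ dominate all of its edges.

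It suffices to treat the Petersen core. Attaching pendant edges at a vertex, or subdividing edges incident to it, only forces more edges to be dominated near that vertex. More precisely, map an edge dominating set of $F$ to an edge set of $P$ of no larger size that dominates $P$: replace a pendant edge at $v$ by any $P$-edge at $v$, and replace a subdivided segment by the original $P$-edge. Then use the fact that $P$ has edge domination number $5$. This holds because four edges cover at most $8$ vertices, and $P$ has no vertex cover of size at most $8$ whose uncovered pair of vertices spans no edge in the required form. One checks this by edge-transitivity of $P$ and the fact that the complement of the $8$ vertices covered by the four edges must be an independent set of size $2$, whose covering structure forces a perfect-matching-like configuration; this is impossible because $P$ has girth $5$.

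\textbf{Step 2: the $3$-hypergraph case.} If Step~1 breaks down in the $3$-hypergraph setting (it may, since $\mH$ can have triangles of hyperedges), we need a different mechanism. Here we use the structure of $L(\mH)$ and its closure. A hyperedge of size $3$ yields a vertex whose neighbourhood is a union of at most three cliques. Local completion at such vertices merges these cliques.

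In $L(F)$ with $F\in\mathcal P'$, every vertex corresponding to a Petersen edge has neighbourhood made of two disjoint cliques, coming from its two endpoints of degree $3$. One shows that $G\subseteq L(F)$ with $\cl(G)=L(F)$ forces $G$ to realise the cliques at the degree-$3$ vertices of $F$. The domination count then transfers: a dominating set of $G$ of size $4$ would give an edge dominating set of $F$ of size $4$, contradicting Step~1.

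\textbf{Main obstacle.} The hardest part is Step~1 together with the pendant and subdivision reduction. The Petersen graph genuinely has edge domination number $5$: a maximal matching of size $5$ is needed, and one checks that no maximal matching of size $4$ exists, since $P$ minus $8$ matched vertices leaves $2$ vertices that must be nonadjacent with all their neighbours matched. We would verify this by a short case analysis using the symmetry of $P$.
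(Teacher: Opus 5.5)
Your reduction fails at the first step, and the proposal never repairs it. A line graph of a $3$-hypergraph need not be claw-free. Take a $3$-hyperedge $\{a,b,c\}$ and edges $ax$, $by$, $cz$ with $x,y,z$ distinct vertices outside $\{a,b,c\}$. Then the vertex of $L(\mathcal{H})$ corresponding to $\{a,b,c\}$ is the centre of an induced claw; the paper's examples $K_{1,2,3}$ and $K_{1,1,3}$ also contain claws.

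You notice this and defer it to ``the closure''. But Ryj\'{a}\v{c}ek's closure, Theorem~\ref{thm5} and Theorem~\ref{thm1} hold only for claw-free graphs. So the dichotomy ``$G$ is Hamiltonian or $\cl(G)=L(F)$ with $F\in\mathcal{P}'$'' is not available for your $G$. Step~2 presupposes exactly this dichotomy and is therefore circular. The claw-free case with $\gamma\le 4$ is essentially already contained in Theorem~\ref{thm1}, so the non-claw-free case is the whole new content of the statement, and you have no argument for it.

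The missing ingredient is a Hamiltonicity criterion that works directly for $3$-hypergraphs. The paper proceeds as follows:
\begin{itemize}
\item It passes to the incidence graph and uses Theorem~\ref{LORV}, i.e.\ dominating closed $W$-quasitrails, in which a white vertex may be reached by a detour traversing an edge twice.
\item It reduces to an essentially $3$-edge-connected graph and its core.
\item It applies Theorem~\ref{thm8} to the at most $12$ vertices of the four dominating (hyper)edges.
\item It rules out the Petersen contraction by locating the white vertex of a $3$-hyperedge and using the $3$-connectedness of $L(\mathcal{H})$.
\end{itemize}

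Step~1 is also wrong as argued. The Petersen graph has edge domination number $3$, not $5$. Use outer cycle $0,1,2,3,4$, spokes $\{i,i+5\}$ and inner cycle $5,7,9,6,8$. The matching $\{\{1,6\},\{3,4\},\{5,7\}\}$ leaves the independent set $\{0,2,8,9\}$ uncovered, so it dominates every edge. Moreover, $\{\{1,6\},\{3,4\},\{5,8\},\{7,9\}\}$ is a maximal matching of size $4$, which contradicts your case analysis. Projecting an edge dominating set of $F$ onto $P$ therefore discards exactly the information that matters.

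The correct reason every $F\in\mathcal{P}'$ needs at least five dominating edges is local. Each Petersen vertex $v$ carries a pendant edge or a subdivided edge, so $v$, or the subdivision vertex next to $v$, must be an endpoint of a dominating edge. A single edge of $F$ can serve at most two Petersen vertices in this way. Even with this fixed, Step~1 only disposes of the exceptional class in the claw-free setting, which is not where the difficulty of the statement lies.
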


Since every graph can be viewed as a 3-hypergraph, Theorem \ref{thm3} can be regarded as a natural generalization of both Theorem \ref{Agvee}.
% and Theorem \ref{thm1}. 

We organize the paper as follows. In Section \ref{proof12_sec}, we present necessary preliminaries and provide the proofs of Theorems~\ref{thm1} and~\ref{thm2}. In Section \ref{proof3_sec}, we present the proof of Theorem~\ref{thm3} and also discuss several related problems that arise naturally from Theorem~\ref{thm3}.

\section{Proofs of Theorems \ref{thm1} and \ref{thm2}}
\label{proof12_sec}

\subsection{Preliminary}

We first recall some necessary known concepts and results. We say that a vertex is {\it simplicial} if its neighbors induce a complete graph. The following Theorem was proved in \cite{RP}.

\begin{theorem} {\rm (Ryj\'a\v{c}ek and Vr\'ana \cite{RP})}\label{thm4}
Let $G$ be a connected line graph of a multigraph. Then there is, up to an isomorphism, a uniquely determined multigraph $H$ with $G=L(H)$ such that a vertex $e\in V(G)$ is simplicial in $G$ if and only if the corresponding edge $e\in E(H)$ is a pendant edge in $H$.
\end{theorem}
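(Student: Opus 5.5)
We split the statement into existence and uniqueness, working throughout with the adjacency structure of edges in a multigraph. Let $H$ be a multigraph and $e=uv\in E(H)$. The neighbours of $e$ in $L(H)$ are the other edges at $u$ and the other edges at $v$. Edges at $u$ are pairwise adjacent, and so are edges at $v$, and every edge parallel to $e$ is adjacent to all of them. Hence $e$ is simplicial in $L(H)$ if and only if every edge $uw$ with $w\neq v$ shares an endvertex with every edge $vx$ with $x\neq u$. This leaves three possibilities:
\begin{enumerate}[(a)]
\item all edges at $u$ (or all edges at $v$) go to $v$, i.e.\ they are parallel to $e$;
\item both sides have edges leaving $\{u,v\}$, and then all of them end in one common vertex $w$, so $u,v,w$ span a (multi)triangle and $u,v$ have no neighbours outside $\{u,v,w\}$;
\item $e$ is pendant, which is the degenerate version of (a).
\end{enumerate}
Conversely, pendant edges are always simplicial.

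\emph{Existence.} Start with any multigraph $H_0$ with $L(H_0)=G$. While some edge is simplicial in $G$ but not pendant, modify $H_0$ without changing its line graph.
\begin{itemize}
\item In case (a), with $u$ having only edges to $v$ and $d(u)\ge 2$, split $u$ into new leaves, one for each edge of the parallel class. These edges then become pendant edges at $v$. They were pairwise adjacent via $v$ before and remain so, and all other adjacencies are unchanged.
\item In case (b), identify $u$ and $v$ into a single vertex $x$. The former $uv$ edges become pendant edges at $x$ (each with a new leaf), and the former $uw,vw$ edges become parallel $xw$ edges. A direct check of the three edge classes shows that every adjacency is preserved. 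For instance, a former $uv$ edge is adjacent exactly to the $uw$ and $vw$ edges before and after the operation, and a $uw$ edge keeps its adjacency to the edges at $w$.
\end{itemize}
Each step strictly increases the number of pendant edges while the number of edges is fixed (equal to $|V(G)|$), so the process terminates. When it stops, every simplicial vertex of $G$ corresponds to a pendant edge. Since pendant edges are always simplicial, this gives the required equivalence. Connectivity of $G$ ensures that $H$ can be taken connected apart from isolated vertices, which we discard.

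\emph{Uniqueness.} This is where I expect the real difficulty. The plan is to reconstruct $H$ from $G$ canonically. Each vertex $z$ of $H$ of degree at least $2$ yields a clique $K_z=\{$edges at $z\}$ in $G$. Every non-simplicial vertex of $G$ lies in exactly two such cliques, and every simplicial vertex lies in exactly one. The obstacle, as in Whitney's theorem, is that a triangle of $H$ and a star $K_{1,3}$ of $H$ both give triangles in $G$, and parallel classes add further ambiguity. I would show that in a representation with no non-pendant simplicial edges, the family $\{K_z\}$ is determined intrinsically. Namely, it consists of maximal cliques $K$ such that every vertex of $G$ lies in at most two of them, chosen so that the simplicial vertices are exactly those covered once.

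The key steps, in order, are:
\begin{enumerate}
\item Show that any two valid families must agree on every non-simplicial vertex. The two cliques through such a vertex are forced by its neighbourhood, which is the union of two cliques, and this decomposition is unique unless the neighbourhood is small.
\item Handle the small exceptional neighbourhoods by hand. These arise exactly from configurations such as (b), which our normal form excludes, together with the cases where $G$ is complete or nearly so.
\item Propagate along the connected graph $G$ to conclude that the two families coincide. The representing multigraph $H$, as the intersection graph of the cliques plus pendant edges for the singly covered vertices, is then determined up to isomorphism.
\end{enumerate}
The small cases in step 2 (for instance $G$ complete, or $H$ a star or a multi-edge) need separate checking. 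I expect that to be the main technical burden.
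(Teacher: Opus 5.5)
\paragraph{Existence is fine; uniqueness is not proved, and cannot be as stated.}
Your existence half is correct. The case analysis of simplicial non-pendant edges is exhaustive, both local moves preserve the line graph, and the number of pendant edges strictly increases, so the procedure ends in a preimage where simplicial vertices are exactly pendant edges. (The paper gives no proof of this statement; it quotes it from [RP].)

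The gap is in the uniqueness half. It is only a plan, and its concrete claims are false. First, the cliques $K_z$ need not be maximal. Take $H=K_4-cd$ on $\{a,b,c,d\}$. It has no pendant edges, and $L(H)=W_4$ has no simplicial vertex. Yet $K_c=\{ac,bc\}\subsetneq\{ab,ac,bc\}$. In fact no family of maximal cliques of $W_4$ covers every vertex exactly twice: all four triangles contain the hub, so at most two can be chosen. Second, your Step 1 fails well outside configuration (b) and near-complete $G$. Whenever both ends of a simple edge $uv$ have the same two further neighbours $a,b$, the neighbourhood $N_G(uv)$ is an induced $C_4$, which splits into two cliques in two ways. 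For $H=K_4$, the octahedron $L(K_4)$ carries two genuinely different valid families: the triangles coming from the four stars of $K_4$, and those coming from its four triangles. So there is nothing canonical for Step 3 to propagate.

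More fundamentally, no completion of Steps 1--3 is possible, because uniqueness is false as stated. Let $H_1$ be $K_4$ on $\{1,2,3,4\}$ with $12,13,23$ doubled, and let $H_2$ be $K_4$ with $14,24,34$ doubled. Two edges are non-adjacent in the line graph iff they are disjoint. Hence:
\begin{itemize}
\item $L(H_1)$ is $K_9$ minus the six pairs $\{12^{(i)},34\}$, $\{13^{(i)},24\}$, $\{23^{(i)},14\}$;
\item $L(H_2)$ is $K_9$ minus the six pairs $\{12,34^{(i)}\}$, $\{13,24^{(i)}\}$, $\{23,14^{(i)}\}$.
\end{itemize}
Both are $\overline{3P_3}$, so they are isomorphic. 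This graph is connected and has no simplicial vertex, since every neighbourhood contains another component of the complement and hence a non-adjacent pair. Neither $H_i$ has a pendant edge, so both satisfy the defining property. Yet their degree sequences are $(5,5,5,3)$ and $(6,4,4,4)$, so $H_1\not\cong H_2$. A second counterexample is $K_4-cd$ with $ac,ad$ doubled versus $K_4-cd$ with $ad,bd$ doubled.

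\paragraph{What can be proved.}
Uniqueness does hold away from such cases, by the following route.
\begin{itemize}
\item In a normal-form preimage, true twins in $G$ are exactly parallel edges, or pendant edges at a common vertex. Any other twin pair forces a simplicial non-pendant edge.
\item Collapsing twin classes gives a simple preimage $H'$ that is still in normal form. The class sizes record the multiplicities.
\item By the strong form of Whitney's theorem, every isomorphism between line graphs of connected graphs is induced by a graph isomorphism unless the graph is $K_3$, $K_{1,3}$, $K_{1,3}+e$, $K_4-e$ or $K_4$. In the other cases the identification $L(H_1')=L(H_2')$ lifts to $H_1\cong H_2$ with multiplicities.
\end{itemize}
$H'$ cannot be $K_3$ or $K_{1,3}+e$, since neither is in normal form, and it cannot be $K_{1,3}$, since pendant bundles have been collapsed. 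So uniqueness holds unless $H'\in\{K_4,K_4-e\}$, which is exactly where the examples above live. The statement needs that exception.
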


The multigraph $H$ with the properties given in Theorem \ref{thm4} will be called the {\it preimage} of a line graph $G$ and denoted by $H=\Lp(G)$. We will also use the notation $u=L(e)$ and $e=\Lp(u)$ for an edge $e\in E(H)$ and the corresponding vertex $u\in V(G)$. An edge-cut $R\subset E(H)$ of a multigraph $H$ is {\em essential} if $H-R$ has at least two nontrivial components. Let $H$ be a graph with at least $k+1$ edges. We say that $H$ is {\em essentially $k$-edge-connected}  if every essential edge-cut in $H$ has size at least $k$. It is a well-known fact that a line graph $G$ is $k$-connected if and only if $\Lp(G)$ is essentially $k$-edge-connected (see \cite[Proposition 1.1.3]{S}). The following properties of the closure will be used in our proofs.

\begin{theorem}\label{thm5} {\rm (Ryj\'a\v{c}ek \cite{R})}  Let $G$ be a claw-free graph and $\cl(G)$ its closure. Then
\newline\indent {\rm (i) }there is a triangle-free graph $H$ such that $\cl(G)$ is the line graph of $H$, and 
\newline\indent {\rm (ii)} $G$ is Hamiltonian if and only if its closure $\cl(G)$ is Hamiltonian.
\end{theorem}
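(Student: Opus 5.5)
The plan is to reduce both parts to a single local operation and then iterate. For an eligible vertex $x$ of a claw-free graph $G$, let $G'_x$ be the graph obtained by adding all missing edges inside $N_G(x)$, so that $N_G(x)$ becomes a clique (the \emph{local completion} at $x$). Since $\cl(G)$ is obtained by repeating this operation until no eligible vertex remains, it suffices to prove three things.
\begin{enumerate}
\item[(a)] $G'_x$ is again claw-free.
\item[(b)] $G'_x$ is Hamiltonian if and only if $G$ is.
\item[(c)] A claw-free graph with no eligible vertex is the line graph of a triangle-free graph.
\end{enumerate}
Applying (a) and (b) along the whole sequence of completions gives (ii), and applying (c) to $\cl(G)$ gives (i). I take uniqueness of the closure as given, since it is not part of the statement.

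For (a), suppose $G'_x$ contains an induced claw with center $c$ and leaves $a_1,a_2,a_3$. If $c$ is not incident to any new edge, then $c\notin N(x)$. In that case some pair $a_i,a_j$ would have to lie in $N(x)$ (otherwise the claw is already in $G$), but then $a_ia_j$ is an edge of $G'_x$, a contradiction. So $c\in N(x)$, and since $N(x)\cup\{x\}$ is a clique in $G'_x$, at most one leaf lies in $N[x]$. A short case analysis then produces a claw in $G$, using either $x$ or a vertex of $N(x)$ as the center; here one uses that $N_G(x)$ is connected and claw-freeness forces $N_G(x)$ to have independence number at most $2$.

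For (b), one direction is trivial. For the other, I would take a Hamilton cycle $C$ of $G'_x$ that uses the minimum number of new edges, and assume some new edge $uv$ with $u,v\in N(x)$ lies on $C$. Fix an orientation of $C$ and look at $x$ together with $x^-$ and $x^+$, its predecessor and successor on $C$. The idea is to find a rerouting that replaces $uv$ by a path through $x$ or by a crossing exchange using an edge of $G[N(x)]$.
\begin{itemize}
\item Connectivity of $G[N(x)]$ supplies two consecutive vertices of $N(x)$ along $C$, say $w$ and $w^+$, that are adjacent in $G$.
\item Claw-freeness at $x$, applied to $x$, $x^{\pm}$ and $u$ or $v$, forces edges of $G$ among the neighbours of $x$.
\item These edges permit a standard $2$-opt-type exchange, producing a Hamilton cycle with fewer new edges and contradicting minimality.
\end{itemize}
This exchange argument is the main obstacle. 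The cases depend on how $u$, $v$ and $x$ interleave on $C$, and on whether $x^-$ and $x^+$ lie in $N(x)$. I would organise the cases by the position of $x$ relative to the segment of $C$ through $uv$.

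For (c), in a claw-free graph $H$ in which no vertex is eligible, every neighbourhood is either a clique or disconnected. If it is disconnected, claw-freeness makes it the disjoint union of exactly two cliques. Hence the maximal cliques of the form $\{v\}\cup K$, with $K$ a component of $N(v)$, partition $E(H)$, and every vertex lies in at most two of them. By Krausz's characterization, $H=L(F)$ for some graph $F$, where we add pendant edges for vertices lying in only one clique. Finally, suppose $F$ contains a triangle $abc$. The vertex $ab$ of $H$ has the neighbours $ac$ and $bc$, which are adjacent to each other. So $N(ab)$ is connected, and hence it must be complete. This forces every edge at $a$ to meet every edge at $b$, which is possible only in degenerate small cases. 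Those can be removed by choosing the preimage appropriately, as in Theorem~\ref{thm4}, which gives a triangle-free $F$.
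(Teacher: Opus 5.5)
The paper does not prove this theorem; it cites it from Ryj\'a\v{c}ek. Ryj\'a\v{c}ek's argument has your three-step shape: local completion at an eligible vertex preserves claw-freeness and the circumference, and a closed claw-free graph is the line graph of a triangle-free graph. Your steps (a) and (c) are essentially right. Step (b), however, is the real content of the theorem, and you have not proved it. Your three bullets are a plan, not an argument.

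\begin{itemize}
\item The first bullet does not follow. Connectivity of $G[N(x)]$ says nothing about which vertices of $N(x)$ are consecutive on $C$. Already for $G[N(x)]\cong C_5$ on $y_1,\dots,y_5$ (allowed, since $\alpha=2$), the cyclic order $y_1,y_3,y_5,y_2,y_4$ has no two consecutive vertices adjacent in $G$.
\item $C$ may contain new edges besides $uv$. So the $C$-neighbours of some $y\in N(x)$ need not be $G$-neighbours, and claw-freeness of $G$ at $y$ cannot be applied to them blindly.
\item Claw-freeness at $x$ applied to $x^{\pm},u,v$ gives only disjunctions such as ``$x^+u\in E(G)$ or $x^+v\in E(G)$'' until you have non-edges among these vertices.
\end{itemize}
No concrete rerouting is ever written down, so nothing contradicts the minimality of $C$.

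What is missing is the exchange argument itself. A workable start: write $C=x\,x^+\cdots u\,v\cdots x^-\,x$.
\begin{itemize}
\item The cycle $x^+\overrightarrow{C}u\,x\,v\overrightarrow{C}x^-x^+$ would have fewer new edges, so $x^-x^+\notin E(G)$. Hence, by claw-freeness at $x$, every vertex of $N(x)$ is $G$-adjacent to $x^-$ or to $x^+$.
\item The cycles $x\,u\overleftarrow{C}x^+v\overrightarrow{C}x^-x$ and $x\,v\overrightarrow{C}x^-u\overleftarrow{C}x^+x$ give $x^+v\notin E(G)$ and $x^-u\notin E(G)$.
\item Therefore $x^+u,\,x^-v\in E(G)$ when $u\ne x^+$ and $v\ne x^-$.
\end{itemize}
Only at this point does connectivity of $G[N(x)]$ enter. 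You must find a $G$-edge joining the side containing $x^+,u$ to the side containing $x^-,v$, and reroute through it. Vertices $y$ whose $C$-neighbours lie outside $N(x)$ need separate treatment; for them claw-freeness gives the shortcut $y^-y^+$. This case analysis is the heart of the proof and is absent from your proposal.

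Two minor points.
\begin{itemize}
\item In (a), the sentence ``not incident to a new edge, hence $c\notin N(x)$'' is garbled. The argument needs neither connectivity nor $\alpha\le 2$, and works for local completion at any vertex. If $c;a_1,a_2,a_3$ is a claw in $G'_x$ and $ca_3$ is a new edge, then $a_1,a_2\notin N[x]$, and $c;x,a_1,a_2$ is a claw in $G$.
\item In (c), say why $N(ab)$ is connected: it is the union of the clique of edges at $a$ and the clique of edges at $b$, joined by the adjacency of $ac$ and $bc$. Then completeness of $N(ab)$ forces $d_F(a)=d_F(b)=2$, and the same argument at $ac$ gives $d_F(c)=2$. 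So the triangle is a component of $F$, and you can replace it by $K_{1,3}$.
\end{itemize}
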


Note that in \cite{BFR}, Brandt, Favaron, and Ryj\'{a}\v{c}ek observed that the closure operation does not preserve the (non-)Hamilton-connectedness of a graph~$G$. To overcome this problem, the concept of the  {\it  \textbf{M}-closure} $\cl^M(G)$ for a claw-free graph $G$ was introduced in \cite{RP}. We omit the technical details of its construction, as they are not required for our proofs. The following result summarizes the key properties of $\cl^M(G)$ that we will use in our arguments:

\begin{theorem}{\rm (Ryj\'{a}\v{c}ek and Vr\'{a}na \cite{RP})\label{thm9}}
	Let $G$ be a claw-free graph. Then $\cl^M(G)$ has the following properties:
	\newline\indent {\rm (i)} $V(G)=V(\cl^M(G))$ and $E(G)\subset E(\cl^M(G))$;
	\newline\indent {\rm (ii)} $\cl^M(G)$ is uniquely determined;
	\newline\indent {\rm (iii)} $G$ is Hamilton-connected if and only if $\cl^M(G)$ is Hamilton-connected,
	\newline\indent {\rm (iv)} $\cl^M(G)=L(H)$, for some multigraph $H$ .
\end{theorem}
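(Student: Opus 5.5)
Since Theorem~\ref{thm9} is quoted from \cite{RP}, I would reprove it by reconstructing the \textbf{M}-closure as a restricted local-completion process. For a vertex $x$, let $G^*_x$ be the graph obtained from $G$ by turning $N_G(x)$ into a clique. The ordinary closure $\cl(G)$ applies this at every eligible vertex, and that can create new Hamilton paths with prescribed ends. So I would single out a subclass of eligible vertices, call them \emph{M-eligible}. The intended definition is: $N_G(x)$ induces a connected noncomplete graph, and for every pair $a,b$, every $(a,b)$-Hamilton path of $G^*_x$ can be rerouted into an $(a,b)$-Hamilton path of $G$. I would aim for a sufficient local condition for this, for instance requiring $G[N_G(x)]$ to be $2$-connected or to contain no "bad" separating vertex. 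The \textbf{M}-closure is then obtained by repeatedly applying local completion at M-eligible vertices until none remain. This must be supplemented by a final step that resolves the configurations which still prevent the result from being a line graph, such as induced diamonds or $K_4^-$-type structures. That step is handled by adding edges, or equivalently by realizing the result as the line graph of a \emph{multigraph}.

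Property (i) is immediate, since only edges are ever added. For (ii), I would prove a confluence (diamond-lemma) statement. If $x$ and $y$ are both M-eligible in $G$, then after completing at $x$, either $y$ is still M-eligible or $N(y)$ has already become a clique, and symmetrically. Since each step strictly increases the edge set of a finite graph, the process terminates, and local confluence then gives a unique terminal graph. This requires checking that M-eligibility is monotone under local completion at other vertices, and I would do that by case analysis on $N(x)\cap N(y)$, using claw-freeness.

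For (iii), one direction is trivial: $G$ is a spanning subgraph of $\cl^M(G)$, so if $G$ is Hamilton-connected then so is $\cl^M(G)$. For the other direction, I would show by induction on the number of completion steps that each single step preserves the existence of $(a,b)$-Hamilton paths in the reverse direction. Given such a path $P$ in $G^*_x$ that uses new edges inside $N(x)$, I would reroute the segments through $x$ together with a path in $G[N(x)]$ to eliminate the new edges one at a time, in the manner of Ryjáček's original proof for cycles. The added difficulty is that $x$ itself, or a neighbor of $x$, may be an endpoint $a$ or $b$. Those endpoint cases are exactly what the M-eligibility condition has to exclude, and I expect this to be the main obstacle: finding the right definition and verifying the rerouting in every endpoint configuration.

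For (iv), I would use the Krausz-type characterization. In a graph with no remaining eligible vertex, every vertex neighborhood is either a clique or disconnected, and by claw-freeness it is then a disjoint union of two cliques. The exceptions come from vertices that were eligible but not M-eligible, and the final step modifies these into local structures that are covered by two cliques once parallel edges are allowed. Thus the edges of $\cl^M(G)$ decompose into cliques with every vertex in at most two of them, which by Krausz's theorem for multigraphs gives $\cl^M(G)=L(H)$ for a multigraph $H$.
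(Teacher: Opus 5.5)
The paper contains no proof of this statement. It is quoted from Ryj\'{a}\v{c}ek and Vr\'{a}na, and the authors explicitly omit the construction of $\mathrm{cl}^M(G)$. Your proposal tries to rebuild that construction. Part (i) and the easy direction of (iii) are fine, but every substantive step is deferred, so there is a genuine gap.

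The central gap is that the operation is never defined. Your ``intended definition'' of M-eligibility (every $(a,b)$-Hamilton path of $G^*_x$ can be rerouted into $G$) is circular. It makes the hard direction of (iii) true by fiat, but it is a global condition with no local description. So you cannot verify that it persists under completions at other vertices, which your confluence argument for (ii) needs. Nor can you say anything about the structure of a graph in which no vertex satisfies it, which (iv) needs.

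The local substitute you suggest, $2$-connectivity of $G[N_G(x)]$, does make the confluence scheme work: completions at two vertices commute, and $2$-connectivity of a neighborhood survives completion at any other eligible vertex. But then the hard direction of (iii) becomes a real lemma. Completing at such an $x$ must create no new Hamilton $(a,b)$-path for any $a,b$, including $a=x$ or $a,b\in N_G(x)$. That is exactly where ordinary local completion breaks down (the observation of Brandt, Favaron and Ryj\'{a}\v{c}ek recalled in the paper), and you leave it as ``the main obstacle''.

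The ``final step'' that adds edges is likewise unspecified and never shown to preserve non-Hamilton-connectedness. Since added edges can make further vertices eligible, it cannot simply be run once at the end, and your uniqueness argument does not cover it.

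Your argument for (iv) also does not go through. The observation about graphs with no eligible vertex is the argument for the ordinary closure, and it is not the relevant case: \textbf{M}-closed graphs generally still have eligible vertices. Suppose $uv$ is a double edge of $H$ lying in no triangle, and both $u$ and $v$ have further edges. Then the vertex of $L(H)$ corresponding to one copy of $uv$ has a connected noncomplete neighborhood in which the other copy is a cut vertex. Theorem~\ref{thm10} permits such double edges, and they occur in the class $\mathcal{W}'$.

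So all the work lies in the vertices you call exceptional, which you hand to the unspecified final step. Your suggested content for that step, adding edges to destroy induced diamonds, points the wrong way. An induced diamond in $L(H)$ is exactly what a double edge of $H$ produces, and such graphs can already be \textbf{M}-closed.

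Even with a two-clique cover of every neighborhood, you still need to show that these local covers fit together into one system of cliques covering $E(\mathrm{cl}^M(G))$, with each vertex in at most two cliques. This must be a covering, not a decomposition, since the vertices of two parallel edges lie together in both of their cliques.

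The precise local operations, their preservation lemma, and the structural characterization of \textbf{M}-closed graphs (Theorem~\ref{thm10}) are the content of Ryj\'{a}\v{c}ek and Vr\'{a}na's paper. Here the statement should simply be cited, as the authors do.
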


For $x, y \in V(G)$, a path (resp.~trail) with endvertices $x, y$ is referred to as an $(x, y)$-path (resp. $(x, y)$-trail), a trail with terminal edges
$e, f \in E(G)$ is called an $(e, f)$-trail. A set of vertices $S\subset V(G)$ {\em dominates} an edge $e$ if $e$ has at least one vertex in $S$, and a subgraph $F \subseteq    G$ dominates $e$ if $V(F )$ dominates $e$. A closed trail $T$ is a {\em dominating closed trail} if $T$ dominates all edges of $G$, and an $(e_1,e_2)$-trail is an {\em internally dominating $(e_1,e_2)$-trail } (abbreviated $(e_1,e_2)$-IDT) if the set of its interior vertices dominates all edges of $G$.

Harary and Nash-Williams \cite{HN} established a fundamental correspondence between a dominating closed trail in a graph $H$ and a Hamilton cycle in its line graph $G=L(H)$. Extending this result to Hamilton-connectedness, Li, Lai, and Zhan \cite{LLZ} proved that $G=L(H)$ is Hamilton-connected if and only if $H$ contains an $(e_1, e_2)$-IDT for every pair of edges $e_1, e_2 \in E(H)$.

\begin{theorem}\label{thm6}
Let $H$ be a multigraph with $|E(H)|\geq 3$ and let $G=L(H)$.
\newline\indent{\rm (i) (Harary and Nash-Williams \cite{HN})} The graph $G$ is Hamiltonian if and only if $H$ has a 
\newline\indent\quad\,\,\,\,dominating closed trail.
\newline\indent{\rm (ii) (Li, Lai and Zhan \cite{LLZ})} For every $e_i\in E(H)$ and $a_i=L(e_i)$, $i=1,2$, $G$ has a Hamilton   \newline\indent\quad\,\,\,\,$(a_1,a_2)$-path if and only if $H$ has an $(e_1,e_2)$-IDT.
\end{theorem}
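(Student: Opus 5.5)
We plan to prove both parts through the same correspondence. On one side is a cyclic (resp.\ linear) ordering of $E(H)$ in which consecutive edges share an endvertex. On the other side is a closed trail (resp.\ an $(e_1,e_2)$-trail) of $H$ whose vertices (resp.\ interior vertices) dominate $E(H)$. By the definition of $L(H)$, such an ordering of $E(H)$ is exactly a Hamilton cycle (resp.\ a Hamilton $(a_1,a_2)$-path) of $G$, since $|E(H)|\ge 3$ and adjacency in $G$ means sharing an endvertex in $H$. Throughout, a single vertex is regarded as a trivial closed trail, so that stars are covered.

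\emph{From trails to Hamiltonian orderings.} Let $T$ be a dominating closed trail with edge sequence $f_1,\dots,f_k$ and transition vertices $v_1,\dots,v_k$, where $v_i\in f_i\cap f_{i+1}$ and indices are taken mod $k$. Every edge $g\notin E(T)$ has an endvertex on $T$. We assign each such $g$ to one such vertex, and that vertex equals $v_i$ for some $i$. We then insert, between $f_i$ and $f_{i+1}$, all edges assigned to $v_i$, in any order. The resulting cyclic sequence contains every edge exactly once, and consecutive edges share a vertex ($v_i$, or an endvertex of some $f_i$). If $T$ is trivial, with vertex $v$, then all edges contain $v$ and any cyclic order works. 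For (ii) we do the same with an $(e_1,e_2)$-IDT: we start the sequence at $e_1$, end it at $e_2$, and distribute the remaining edges among the interior vertices, which is possible because they dominate $E(H)$.

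\emph{From Hamiltonian orderings to trails.} Let $a_1\cdots a_m$ be a Hamilton cycle of $G$, and let $e_i=\Lp(a_i)$. For each $i$ we choose $v_i\in e_i\cap e_{i+1}$, cyclically. Both $v_{i-1}$ and $v_i$ are endvertices of $e_i$. Hence the walk $v_1,v_2,\dots$ that traverses $e_i$ whenever $v_{i-1}\ne v_i$, and stays put otherwise, is a closed trail: each edge is used at most once because the $e_i$ are distinct. Every $e_i$ contains $v_{i-1}$, which is a vertex of this trail, so the trail is dominating. If all $v_i$ coincide, the trail is trivial and $H$ is a star (possibly with multiple edges). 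For (ii), let $a_1=u_1,\dots,u_m=a_2$ be the Hamilton path and $e_i=\Lp(u_i)$. We choose $v_i\in e_i\cap e_{i+1}$ for $1\le i\le m-1$, and form the trail that starts with $e_1$ (entering $v_1$ from the other end of $e_1$), then uses the edges $e_i$ with $v_{i-1}\ne v_i$ for $2\le i\le m-1$, and finishes with $e_m$. All the $v_i$ are interior vertices, and each edge $e_i$ contains $v_{i-1}$ or $v_i$, so the trail is internally dominating.

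The main obstacle is the bookkeeping in degenerate situations. These include parallel edges, where $e_i\cap e_{i+1}$ may have two vertices and the choice of $v_i$ matters. They also include $e_1$ and $e_m$ being adjacent or parallel, and the case $v_1$ lying on $e_m$. In each case we must check that the object produced is genuinely a trail with the prescribed terminal edges, that $e_1$ and $e_m$ are not reused internally, and that the hypothesis $|E(H)|\ge 3$ excludes the trivial counterexamples. We would handle these cases one by one after setting up the general construction.
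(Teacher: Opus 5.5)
The paper gives no proof of this statement; it simply quotes it from Harary--Nash-Williams and Li--Lai--Zhan. Your argument is the standard edge-ordering correspondence behind those results and is correct, including the convention that a single vertex is a trivial closed trail, which the statement needs for stars. The degenerate cases you flag at the end need no separate treatment. Your constructions work for any choice of $v_i\in e_i\cap e_{i+1}$, and $e_1,e_m$ cannot reappear internally because the $e_i$ are distinct. Adjacency or parallelism of $e_1,e_m$ at most makes an endvertex interior as well, which does no harm.
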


The concept of the core of a graph plays an important role in studying the Hamiltonian properties of line graphs. The  {\em  core}  of a graph $H$, denoted by ${\rm co}(H)$, is the multigraph obtained by iteratively suppressing all vertices of degree 2 (that is, replacing each such vertex and its incident edges with a single edge between its neighbors), and by removing all pendant edges. Shao \cite[Lemma 1.4.1]{S} established the following properties of the core of a multigraph.

\begin{theorem}{\rm (Shao \cite{S})}\label{thm7}
Let $H$ be an essentially $3$-edge-connected multigraph. Then
\newline\indent {\rm (i)} $\co(H)$ is uniquely determined;
\newline\indent {\rm (ii)} $\co(H)$ is $3$-edge-connected.
\end{theorem}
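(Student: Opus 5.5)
The statement has two parts, and I would argue both directly from the definition of essential edge-cuts, following the two operations that define $\co(H)$ (removing pendant edges and suppressing degree-2 vertices). Throughout, I assume $H$ is connected and not one of the trivial small graphs, such as a star, where the core degenerates. These degenerate cases are dismissed at the start.

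\textbf{Step 1: the effect of deleting pendant edges.} Let $H_1$ be obtained from $H$ by deleting all pendant edges together with their degree-1 endvertices. Suppose some vertex $w$ ends up with degree at most $1$ in $H_1$. Then $w$ carried pendant edges plus at most one further edge $e$. Hence $\{e\}$ is an edge-cut of $H$ with nontrivial components on both sides: the star at $w$ on one side, and on the other side a nontrivial part, by nondegeneracy. This contradicts essential $3$-edge-connectivity. So $H_1$ has minimum degree at least $2$, and iterating the deletion does not create new pendant edges.

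\textbf{Step 2: the degree-2 vertices of $H_1$ form an independent set.} Suppose $u,v$ are adjacent and both have degree $2$ in $H_1$. Let $f,g$ be their other incident edges, which are edges of $H$. Then $\{f,g\}$ is an edge-cut of $H$. One side contains the edge $uv$ (plus possibly pendant edges), and the other side contains the rest, which is nontrivial. This is an essential cut of size $2$, a contradiction. The same argument rules out a degree-2 vertex whose two edges are parallel to one neighbour $x$, unless $x$ itself is separated by a $\le 2$ cut. That situation again yields a contradiction, except when $x$ has degree at least $3$; in that case the suppression produces a loop, which we keep.

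\textbf{Proof of (i).} By Step 2, suppressing a degree-2 vertex $u$ with neighbours $x,y$ replaces the path $xuy$ by an edge $xy$. Different such paths share no edges, and every newly created edge has both ends of degree at least $3$, so no new degree-2 vertices arise. Therefore the suppressions commute, and $\co(H)$ is the multigraph obtained from $H_1$ by replacing each such path $xuy$ by an edge $xy$. This description is independent of the order of operations, which proves that $\co(H)$ is uniquely determined.

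\textbf{Proof of (ii).} Suppose $\co(H)$ has an edge-cut $R$ with $|R|\le 2$ and sides $A,B$. Each core edge in $R$ corresponds either to an edge of $H$ or to a path $xuy$ of length $2$ in $H$. In the latter case, choose one of its two edges. This gives an edge-cut $R'$ of $H$ with $|R'|=|R|\le 2$. The pendant edges and the subdivision vertices of paths not in $R$ are distributed to the side containing their attachment. Every vertex of $\co(H)$ has degree at least $3$, and $|R|\le 2$. So any vertex $a\in A$ has an incident edge or loop of $\co(H)$ lying inside $A$, and this edge lifts to an edge of $H$ on the $A$-side. The same holds for $B$. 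Hence both components of $H-R'$ are nontrivial, so $R'$ is an essential cut of size at most $2$, a contradiction. Connectivity of $\co(H)$, the case $|R|=0$, follows in the same way from the connectivity of $H$.

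\textbf{Main obstacle.} I expect the main difficulty to be the bookkeeping in Step 2 and in the lifting argument for (ii). This includes loops and parallel edges created by suppression, making sure each chosen lifted edge really separates the sides, and identifying precisely the small degenerate graphs that must be excluded.
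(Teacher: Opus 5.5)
The paper does not prove this statement. It imports it from Shao's thesis and uses it as a black box, so there is no in-paper argument to compare with. Your route is the standard one: pendant edges only hang on vertices of large degree, the degree-$2$ vertices of $H_1$ are independent, and a cut of size at most $2$ in $\co(H)$ lifts edge-for-edge to an essential cut of $H$. It is correct in substance, and what remains is tightening.

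\begin{itemize}
\item \textbf{The double-edge case in Step~2.} The sentence about a degree-$2$ vertex $u$ whose two edges both go to $x$ is garbled, because such a $u$ is not ruled out. For instance, $K_4$ plus a new vertex joined by a double edge to one vertex is essentially $3$-edge-connected. The correct statement is this: the non-pendant edges joining $x$ to vertices other than $u$ form a cut isolating the nontrivial piece containing the double edge $ux$, so there are at least three of them. Suppressing $u$ therefore creates a loop at $x$ without changing $d(x)\ge 5$, which is all that (i) and (ii) use.

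\item \textbf{Interleaving in (i).} You show that suppressions commute among themselves, but you should also justify that pendant deletion and suppression may be interleaved in any order. The cut argument of Step~1 gives more than you state. The non-pendant edges at a vertex carrying pendant edges isolate a nontrivial star, so there are at least three of them. Hence no degree-$2$ vertex of $H$ is adjacent to a leaf. Pendant deletion creates neither new leaves nor new degree-$2$ vertices, and the two operations act on disjoint edge sets.

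\item \textbf{Wording in (ii).} $A$ and $B$ need not induce connected subgraphs. So conclude that $H-R'$ has a nontrivial component on each side, rather than that ``both components'' are nontrivial.

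\item \textbf{Degenerate cases.} Name the excluded small graphs explicitly: those whose core collapses to a single vertex, such as stars or bouquets of multiple edges at one vertex. That exclusion is what guarantees that the far side of every cut you use contains an edge.
\end{itemize}
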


Let $H$ be a multigraph, $R\subset H$ a spanning subgraph of $H$, and let $\mathcal{R}$ be the set of components of $R$. Then $H/R$ is the multigraph with $V(H/R)=\mathcal{R}$, in which, for each edge in $E(H)$ between two components of $R$, there is an edge in $E(H/R)$ joining the corresponding vertices of $H/R$. The contraction operation maps the vertex set $V(H)$ onto $V(H/R)$, where each component of the subgraph $R \subseteq H$ is contracted to a single vertex in $H/R$. The multigraph $H/R$ is said to be the contraction of $H$ by  $R$. If $H/R \cong F$, then this contraction defines a mapping $\alpha: H \rightarrow F$, which is called the {\em contraction} of $H$ to $F$. For a component $R_1$ of $R$ and a vertex $u \in V(F)$ such that $\alpha(R_1) = u$, we refer to $R_1$ as the  {\em preimage} of the vertex $u $.

The celebrated Nine-Point Theorem by Holton, McKay, Plummer and Thomassen \cite{HMPT} asserts that every 3-connected cubic graph contains a cycle passing through any set of nine vertices. This result was later strengthened by Bau and Holton \cite{BH}, who proved that every 3-connected cubic graph contains a cycle through any twelve prescribed vertices, unless the graph can be contracted to the Petersen graph.  Later, Chen, Lai, Li, Li, and Mao \cite{CLLLM} extended this result to 3-connected graphs. Their generalization is formulated as follows:

\begin{theorem}{\rm  (Chen, Lai, Li, Li and Mao \cite{CLLLM})}\label{thm8}
 Let $H$ be a $3$-edge-connected multigraph and let $A\subset V(H)$ with $|A|\leq12$. Then either $H$ has a closed trail $T$ such that $A\subset V(T)$, or $H$ can be contracted to the Petersen graph in such a way that the preimage of each vertex of the Petersen graph contains at least one vertex in $A$.
\end{theorem}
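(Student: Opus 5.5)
We would reduce Theorem~\ref{thm8} to the cubic case, namely the Bau--Holton theorem. That theorem says a $3$-connected cubic graph has a cycle through any twelve prescribed vertices, unless it contracts to the Petersen graph $P$ with each preimage containing a prescribed vertex. The reduction goes through an inflation of $H$, and the obstacle comes at the end.

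\emph{Reduction.} Since $H$ is $3$-edge-connected, we first delete all loops; this affects neither $3$-edge-connectivity nor the existence of a closed trail through $A$. The graph $H$ then has minimum degree at least $3$. Next we \emph{inflate} each vertex $v$ of degree $d(v)\ge 4$: replace $v$ by a cycle $K_v$ of length $d(v)$ and attach the edges formerly incident with $v$ to distinct vertices of $K_v$, in some cyclic order. Vertices of degree $3$ are kept, and we put $K_v=\{v\}$ for them. Parallel edges between two degree-$3$ vertices can be handled the same way, by inflating one endpoint, so we may assume the result $H'$ is simple.

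We then check three properties of $H'$.
\begin{itemize}
\item $H'$ is cubic and $3$-edge-connected. An edge cut of $H'$ that avoids all the cycles $K_v$ is an edge cut of $H$. A cut that meets some $K_v$ meets it in at least two edges and separates vertices of $H$ or leaves a nontrivial side, and a short case check gives size at least $3$.
\item For cubic graphs, vertex connectivity equals edge connectivity, so $H'$ is $3$-connected.
\item For each $a\in A$, fix a representative $a'\in V(K_a)$ and let $A'$ be the set of representatives, so $|A'|=|A|\le 12$.
\end{itemize}
If $|A|<12$, we pad $A'$ arbitrarily, or simply note that Bau--Holton applies to at most twelve vertices.

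\emph{First alternative.} Suppose Bau--Holton gives a cycle $C$ of $H'$ through $A'$. Contract every $K_v$ back to $v$. The edges of $C$ outside the inflation cycles are distinct edges of $H$. They form an even subgraph of $H$, because each vertex of $C$ has degree $2$ in $C$, and contracting $K_v$ preserves parity. This subgraph is connected, since $C$ is connected. It is therefore an Eulerian subgraph, that is, a closed trail $T$ of $H$, and $T$ visits every $a\in A$ because $C$ passes through $a'\in K_a$. A degenerate case is when $C$ lies inside a single $K_v$, so that $T$ has no edges. This happens only if $A=\{v\}$, and then a trivial closed trail, or any cycle through $v$, works.

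\emph{Second alternative.} Suppose instead that $H'$ contracts to $P$ via $\alpha'$, with each preimage containing a vertex of $A'$. We want to push $\alpha'$ down to a contraction $\alpha$ of $H$ to $P$. This is the main obstacle. The push-down works exactly when every inflation cycle $K_v$ lies inside a single preimage, and nothing forces that.

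To handle it, we would exploit the freedom in choosing both the cyclic orders of the inflations and the representatives $a'$. Suppose some $K_v$ meets two preimages $X$ and $Y$. Then $K_v$ contributes at least two edges between different preimages. We would then use two facts:
\begin{itemize}
\item $P$ is cubic, so each preimage sends out exactly three edges;
\item $P$ is cyclically $5$-edge-connected.
\end{itemize}
From these we aim to show one of two outcomes. Either re-choosing the cyclic order at $v$ (for instance, making the edges that go into $X$ consecutive on $K_v$) produces an inflation for which Bau--Holton yields a cycle, which reduces to the first alternative. Or the split is impossible, because it would create an edge cut of $H$ of size less than $3$, or a cyclic cut of $P$ of size less than $5$.

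We would organise this as an extremal argument. Among all inflations and all choices of representatives, pick one minimising the number of inflation cycles that are split by the Petersen contraction, and derive a contradiction from any remaining split cycle. Once no $K_v$ is split, contracting each $K_v$ gives the required contraction of $H$ to $P$, in which each preimage contains some $a$ with $a'$ in it.
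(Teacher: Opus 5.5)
The paper does not prove this statement; it is quoted from Chen, Lai, Li, Li and Mao, so your reduction has to stand on its own. Your first alternative is fine: the edges of $C$ outside the inflation cycles form a connected even subgraph of $H$ meeting every $a\in A$. The second alternative, however, is the entire content of the theorem beyond Bau--Holton, and you only announce it. You name an extremal argument and the two outcomes you hope for, but give no step that produces either.

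It helps to see what a split is. Since $\alpha'$ maps onto the simple cubic graph $P$, two preimages are joined by at most one edge of $H'$, and each preimage sends out exactly three edges. Hence the edges of a split $K_v$ that join different preimages project to a closed trail in $P$. In a cubic graph of girth $5$ this must be a cycle $Q$ of length at least $5$. So splits occur only when $d(v)\ge 5$, each preimage meets $K_v$ in a single arc, and the natural push-down of $\alpha'$ is a contraction of $H$ onto $P/Q$, not onto $P$.

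Such splits genuinely occur even when $H$ has the required trail. Let $H$ consist of a vertex $v$ joined by a double edge to each of $u_1,\dots,u_5$, together with the edges $u_iy_i$ and the $5$-cycle $y_1y_3y_5y_2y_4$, and let $A=V(H)$. Suppose the two edges to each $u_i$ are consecutive on $K_v$. Then $H'$ is the Petersen graph with the five vertices of a $5$-cycle inflated into triangles. It has no cycle through all $u_i,y_i$, since such a cycle would project to a Hamilton cycle of $P$, so Bau--Holton must return a split contraction. Yet $v\,u_1\,y_1\,y_3\,y_5\,y_2\,y_4\,u_4\,v\,u_2\,v\,u_3\,v\,u_5\,v$ is a closed trail of $H$ through $A$.

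So a split cannot be refuted for a fixed order; it has to be removed by changing the order, and nothing in the proposal shows that some change always works. Re-ordering $K_v$ changes $H'$ globally, and Bau--Holton may then return a different Petersen contraction that splits another inflation cycle. You exhibit no quantity that provably decreases, and the appeal to cyclic $5$-edge-connectivity of $P$ is never turned into an argument. Your minimisation would also have to range over the Petersen contractions of $H'$, since Bau--Holton only guarantees that some such contraction exists.

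There is also an error in your first bullet: for an arbitrary cyclic order, $H'$ need not be $3$-edge-connected, because a $3$-edge-connected multigraph may have cut vertices. Let $H$ be two copies of $K_4$ sharing a vertex $v$, and put the three edges from $v$ into each copy consecutively on the $6$-cycle $K_v$. Then the two edges of $K_v$ between these arcs form a $2$-edge-cut of $H'$, and Bau--Holton does not apply. Your case check misses exactly the cuts that meet a single $K_v$ in two edges and nothing else. These exist precisely when the edges attached to some arc of $K_v$ are exactly the edges from $v$ to a union of some, but not all, components of $H-v$. This is repairable: order the edges at $v$ so that the edge sets going to any two components of $H-v$ interleave, which is possible since each component sends at least three edges to $v$. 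But it restricts the very cyclic orders you intend to vary in the extremal step.
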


To prove Theorem 1.6, we utilize a result from \cite{LRVXY}, which is based on a stronger theorem by Bau and Holton \cite{BH-2} concerning cycles in $3$-connected cubic graphs that pass through a given set of vertices and a specified edge.

\begin{theorem}{\rm (Liu, Ryj\'{a}\v{c}ek, Vr\'{a}na, Xiong and Yang \cite{LRVXY})},\label{thm8-1}
	Let $H$ be a $3$-edge-connected multigraph, $A\subset V(H)$ with $|A|=8$, and let $e\in E(H)$. Then either $H$ contains a closed trail $T$ such that $A\subset V(T)$ and $e\in E(T)$, or there is a contraction $\alpha:H\rightarrow P$ such that  $\alpha(e)=xy\in E(P)$ and $\alpha(A)=V(P)\setminus\{x,y\}$.
\end{theorem}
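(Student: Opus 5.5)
The statement is Theorem~\ref{thm8-1}, which the paper quotes from \cite{LRVXY}. My plan is to reduce it to the cubic theorem of Bau and Holton \cite{BH-2}, in the same way that Theorem~\ref{thm8} is obtained from \cite{BH}. I use \cite{BH-2} in this form: in a $3$-connected cubic graph, for any $8$ vertices and any edge, there is a cycle through all of them unless the graph contracts to $P$ with the edge mapped to an edge $xy$ and the $8$ vertices mapped onto $V(P)\setminus\{x,y\}$.

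\textbf{Step 1 (reduction to the cubic case).} Let $H$ be a counterexample with $|V(H)|+|E(H)|$ minimum. Since $H$ is $3$-edge-connected, $\delta(H)\ge 3$.

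I build a cubic graph $H^*$ by inflating every vertex $v$ of degree $d\ge 4$ into a cycle $C_v$ of length $d$, with each original edge at $v$ attached to its own vertex of $C_v$. Parallel edges and loops are first removed by subdividing and inflating, so that $H^*$ becomes a simple cubic graph.

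The key check is that $H^*$ is $3$-edge-connected. Any cut that separates a cycle $C_v$ into two arcs uses two cycle edges, plus at least one external edge leaving the smaller side. Every other cut corresponds to a cut of $H$. For a simple cubic graph, $3$-edge-connectivity gives $3$-connectivity.

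For each $a\in A$, choose one vertex $a^*$ of its blob $C_a$ (or $a$ itself if it was not inflated). The edge $e$ remains an edge $e^*$ of $H^*$, and I choose the attachment points so that $e^*$ is not incident with any $a^*$.

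\textbf{Step 2 (lifting a cycle).} Apply \cite{BH-2} to $H^*$, $A^*=\{a^*: a\in A\}$ and $e^*$.

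Suppose we get a cycle $C$ through $A^*$ and $e^*$. Contracting every blob back to its vertex turns $C$ into a connected even subgraph of $H$. This is a closed trail $T$ that contains $e$ and visits every $a\in A$, since $C$ visits the blob of each $a$.

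\textbf{Step 3 (the Petersen alternative), the main obstacle.} Otherwise there is a contraction $\beta:H^*\to P$ with $\beta(e^*)=xy$ and $\beta(A^*)=V(P)\setminus\{x,y\}$. To obtain $\alpha:H\to P$, I need every blob $C_v$ to lie inside a single preimage of $\beta$. A priori a blob may be split among several preimages.

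The first attempt is a minimality argument using splitting instead of naive inflation. At a vertex $v$ of degree $\ge 4$, take a splitting-off of a pair of edges $uv,vw$ into $uw$ that preserves $3$-edge-connectivity; Mader/Fleischner-type splitting lemmas guarantee such a pair. The split graph $H'$ is smaller. Any closed trail in $H'$ lifts to one in $H$ by re-routing $uw$ through $v$, and if $v\in A$ the lifted trail passes through $v$.

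Thus $H'$ must fall into the Petersen alternative. I then show that the Petersen contraction of $H'$ pulls back to $H$: either $v$ can be placed in the preimage containing $u$ or $w$, or the different admissible splitting pairs at $v$ yield contradictory contractions. The latter case forces a closed trail through $A$ and $e$ in $H$. This case analysis at the vertices of degree $\ge 4$, and at the endpoints of $e$, is the delicate part.

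Remaining vertices of degree $3$ with parallel edges are handled by subdividing, or by contracting a $2$-cycle while keeping $3$-edge-connectivity. This leaves exactly the $3$-connected cubic situation of \cite{BH-2}.
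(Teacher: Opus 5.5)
This theorem is not proved in the paper; it is quoted from Liu, Ryjáček, Vrána, Xiong and Yang, who derive it from the Bau--Holton theorem on cubic graphs. Your strategy is the natural one: inflate to a $3$-connected cubic graph, apply Bau--Holton, then contract back. Step~2 is fine: a cycle of $H^*$ through $A^*$ and $e^*$ contracts to a connected even subgraph of $H$, i.e.\ a closed trail through $A$ containing $e$.

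Step~1, however, contains a false claim. With an arbitrary attachment order the inflated graph need not be $3$-edge-connected, because the external edges leaving an arc of $C_v$ may all end on the same side of the cut. Take $H$ to be two copies of $K_4$ sharing a vertex $v$; then $H$ is $3$-edge-connected and $d(v)=6$. Attach the three edges to the first copy at consecutive vertices $c_1,c_2,c_3$ of $C_v=c_1c_2\cdots c_6$. Then $\{c_3c_4,c_6c_1\}$ is a $2$-edge-cut of $H^*$, and Bau--Holton cannot be applied. This happens precisely when $v$ is a cut vertex of $H$ and some arc carries exactly the edges to a union of components of $H-v$. You must therefore choose the cyclic order so that these components interleave (for $d(v)=4$ the problem cannot arise).

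The genuine gap is Step~3, which is the entire content of the reduction and is only announced. Blob splitting really does occur. If $\beta$ splits $C_v$, each preimage meets $C_v$ in a single arc, since a preimage has only three boundary edges. Hence the crossing edges of $C_v$ map onto a cycle of $P$. Because $P$ has girth $5$, a blob of length $4$ is never split, but for $d(v)\ge 5$ nothing prevents it, and then $\beta$ induces no contraction of $H$ to $P$.

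Your splitting-off fallback does not repair this, for three reasons.
\begin{itemize}
\item Splitting off a pair at a vertex of degree $4$ leaves that vertex with degree $2$, so $H'$ is not $3$-edge-connected.
\item Splitting off completely deletes $v$. If $v\in A$, the requirement to visit $v$ cannot be encoded by a vertex set of $H'$ together with a single prescribed edge.
\item The pull-back of $\alpha':H'\to P$ is only asserted. Suppose $u,w$ lie in one preimage and $v$ in another, or $u,v,w$ lie in three different preimages, or $e\in\{uv,vw\}$ was replaced by $uw$. In each case the partition inherited by $H$ does not contract to $P$ with the required properties. There you must either re-partition or actually construct a closed trail through $A$ and $e$ in $H$, using the extra connectivity at $v$.
\end{itemize}
The sentence claiming that different splitting pairs yield ``contradictory contractions'' which ``force a closed trail'' is not an argument for this. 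As written, the proposal shows only that trails lift, not the dichotomy stated in the theorem.
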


In order to apply Theorems~\ref{thm8} and~\ref{thm8-1}, we first reduce the problem to the core of $H$. Note that if $H$ is essentially $3$-edge-connected, then no two vertices of degree less than $2$ are adjacent. Hence, each edge of $H$ can contain at most one vertex of degree less than $2$.   Let $e_1$ and $e_2$ be two edges of $H$, and let $e_1', e_2'$ denote their corresponding edges in $\mathrm{co}(H)$. The definition of each $e_i'$ $(i = 1, 2)$ depends on how the edge $e_i$ is treated during the construction of $\mathrm{co}(H)$ (see Fig. 2), and is given as follows:

\begin{figure}[!ht]
	\centering
	\includegraphics[width=110mm]{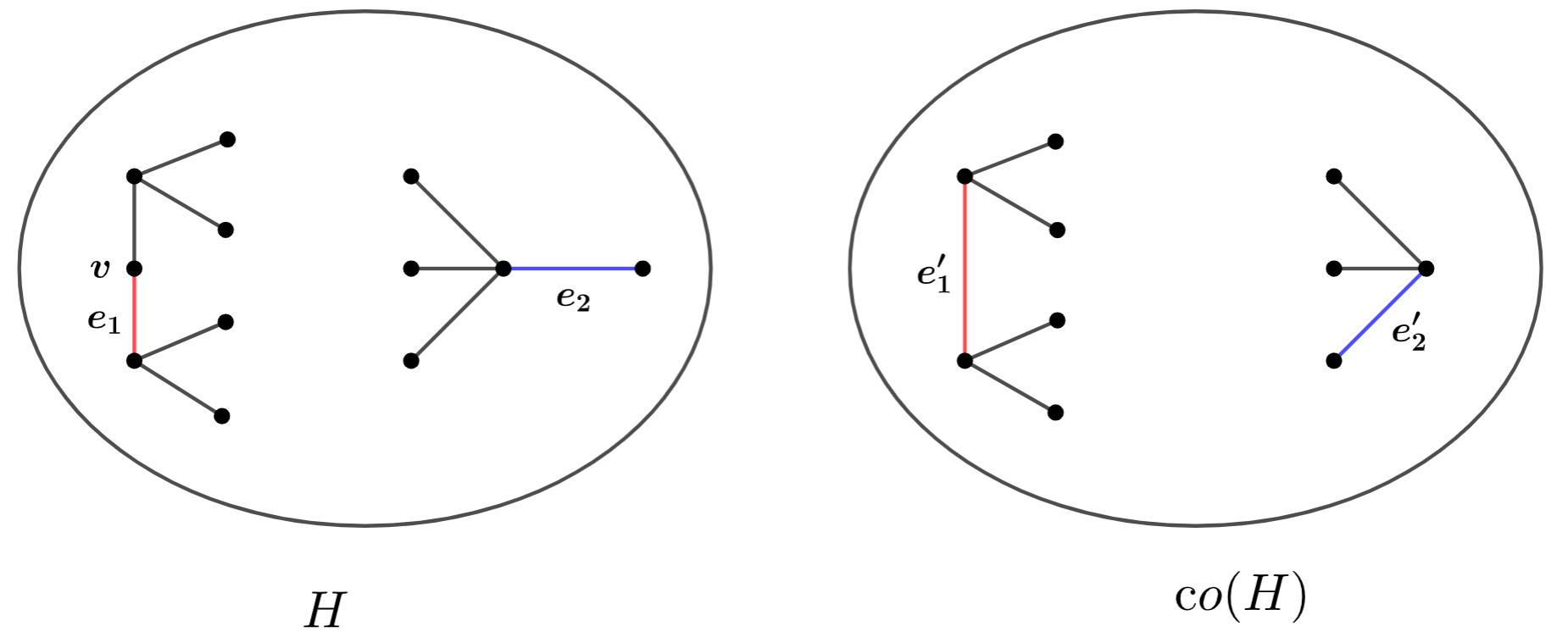} 
	\caption{Illustrative examples for the construction in Definition \ref{Defi1}.}
	\label{fig2}
\end{figure}

\begin{definition}\label{Defi1}
		Let $e_1$ and $e_2$ be two edges that were removed during the construction of $\mathrm{co}(H)$. The corresponding edges $e_1', e_2' \in E(\mathrm{co}(H))$ are defined by one of the following rules, depending on the role of $e_1$ and $e_2$ in the reduction process:
	
	\begin{itemize}
		\item[(i)] If one of the endvertices of $e_1$, say $v$, has degree two, then $e_1'$ is defined as the edge in $\mathrm{co}(H)$ that results from suppressing $v$.
		
		\item[(ii)] If $e_2$ is a pendant edge, then $e_2'$ is defined as an arbitrary edge in $\mathrm{co}(H)$ incident to the non-leaf endvertex of $e$.
	\end{itemize}
\end{definition}

\subsection{Proof of Theorem \ref{thm1}}

Now we are ready to give a proof of Theorem \ref{thm1}. It should be noted that certain parts of the proof of Theorem~\ref{thm1} coincide with those in the proof of Theorem 1 in \cite{VZZ}. However, since certain technical details require further discussion, and for the sake of completeness, we provide a full proof here (including the identical parts).

\bigskip

\noindent{\bf Proof of Theorem \ref{thm1}.}  Let $G$ be a 3-connected claw-free graph with domination number at most 5. Suppose, for the sake of contradiction, that $G$ is not Hamiltonian. As $V(\cl(G))=V(G)$, $\gamma(\cl(G))\le \gamma(G)$ and $\cl(G)$ is 3-connected, the graph $\cl(G)$ satisfies the same hypotheses as $G.$ By Theorem \ref{thm5}, the graph $G$ is Hamiltonian if and only if its closure $\cl(G)$ is Hamiltonian. Hence it suffices to prove Theorem \ref{thm1} for closed claw-free graphs. Let $H=\Lp(G).$ Since $G$ is 3-connected, $H$ is essentially $3$-edge-connected. By Theorem~\ref{thm6}, $H$ has no dominating closed trail, and by Theorem~\ref{thm7}, the core $\co(H)$ is $3$-edge-connected.

In the case that the edge $e$ is removed in the construction of $\mathrm{co}(H)$, we define the corresponding edge $e'$ in $\mathrm{co}(H)$ as specified in Definition~\ref{Defi1}. For convenience, if the edge $e$ remains in $\mathrm{co}(H)$ (i.e., both of its endvertices have degree at least $3$), we simply let $e' = e$. See the example in Fig. \ref{fig2}. One can easily check that if an edge is dominated by $e$, it must also be dominated by $e'$.

Let $\{u_1,u_2,u_3,u_4,u_5\}$ be a dominating set of $G$. By the definition of a line graph, the set of the five corresponding edges in $H$, denoted by $\{e_1,e_2,e_3,e_4,e_5\}$ dominate all edges of $H$.  For each $e_i \in \{e_1, e_2, e_3,e_4,e_5\}$, we identify an edge $e_i'$ in $\mathrm{co}(H)$ using exactly the same rule as described for $e'$ in Definition~\ref{Defi1}. Let $e'_i = z_{2i-1}z_{2i}$ for each $i \in [5]$, that is, the endvertices of $e'_i$ are $z_{2i-1}$ and $z_{2i}$.  Since the set $\{z_1, \ldots, z_{10}\}$ dominates all edges of $\mathrm{co}(H)$, the existence of a closed trail in $\mathrm{co}(H)$ that passes through all these vertices implies the existence of a dominating closed trail in $\mathrm{co}(H)$. It is straightforward to verify that, for every possible construction of $e'_i$, such a dominating closed trail in $\mathrm{co}(H)$ corresponds to a dominating closed trail in $H$, which contradicts our earlier assumption. Then, by Theorem~\ref{thm8}, $\co(H)$ can be contracted to the Petersen graph such that the preimage of each vertex in the Petersen graph contains a vertex from $\{z_1, \ldots, z_{10}\}$. Let $p_i , i\in [10]$ be the ten vertices of the Petersen graph, and $H_i$ be the preimage of $p_i, i\in [10]$ such that $z_i\in V(H_i)$. According to the definition of the contraction, the neighborhood of $H_i$ in $\mathrm{co}(H)$ satisfies $|N_{\mathrm{co}(H)}(H_i)| = 3$.

We now prove that $H_i$ consists of a single vertex. Since ${z_1, \ldots, z_{10}}$ dominates all edges of $\co(H)$ and only $z_i$ lies in $H_i$, every edge of $H_i$ must be dominated by $z_i$.
Let $x_1, \dots , x_j$ be 
%$x_j$ $(j \ge 0)$ be 
the vertices of $H_i$ adjacent to $z_i$, where $j \geq 0$.
If $j \ge 2$, then because $|N_{\co(H)}(H_i)| = 3$, at least one of $x_1$ or $x_2$ has degree at most $2$, contradicting the assumption that $\co(H)$ has minimum degree at least $3$. If $j = 1$, then either $x_1$ or $z_i$ must have degree at most $2$, again a contradiction. Therefore, the only possibility is $j = 0$, and hence $H_i$ consists of the single vertex $z_i$.

Thus, we have $\co(H) = P$. By the definition of $\co(H)$ and the fact that $H$ is essentially 3-edge-connected, it follows that $H$ is obtained from $\co(H)$ by adding pendant edges to its vertices and by subdividing some of its edges. Note that if there exists a vertex in $P$ to which no pendant edge is attached and none of its incident edges are subdivided, then one can find a dominating closed trail in $H$, contradicting Theorem~\ref{thm6}.

 This completes the proof.\hfill $\Box$

\subsection{Proof of Theorem \ref{thm2}}

\begin{figure}[!ht]
\centering
  \includegraphics[width=90mm]{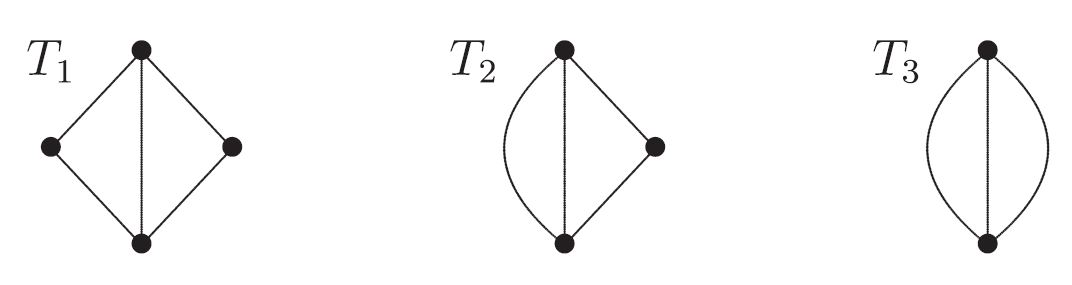}
  \caption{The diamond $T_1$, the multitriangle $T_2$ and the triple edge $T_3$.}
  \label{fig3}
\end{figure}

\begin{theorem}{\rm (Ryj\'{a}\v{c}ek and Vr\'{a}na \cite{RP})\label{thm10}}
Let $G$ be a claw-free graph and let $T_1$, $T_2$, $T_3$ be the multigraphs shown in Figure \ref{fig3}. Then $G$ is $\textbf{M}$-closed if and only if $G$ is a line graph of a multigraph and $\Lp(G)$ does not contain a subgraph (not necessarily induced) isomorphic to any of the multigraphs $T_1$, $T_2$ or $T_3$.
\end{theorem}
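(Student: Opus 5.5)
Since Theorem \ref{thm10} is a structural characterization quoted from \cite{RP}, the plan is to reconstruct its proof directly from the definition of the $\textbf{M}$-closure. Recall that $\cl^M(G)$ is obtained by repeatedly performing local completions $G^*_x$ at vertices $x$ that are eligible and satisfy the additional $\textbf{M}$-condition, until no such vertex remains. Thus ``$G$ is $\textbf{M}$-closed'' means exactly that no vertex of $G$ admits such an admissible local completion. The proof then has two directions.

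\emph{Necessity.} Assume $G$ is $\textbf{M}$-closed. We first show that every vertex neighbourhood $G[N_G(x)]$ is a disjoint union of at most two cliques. In a claw-free graph, $N_G(x)$ has independence number at most $2$. If $N_G(x)$ is connected and noncomplete, then $x$ is eligible, and we would argue that the $\textbf{M}$-condition forces such an $x$ to be completable unless its neighbourhood splits into two cliques. Graphs in which every neighbourhood is the union of two cliques, with the appropriate consistency, are line graphs of multigraphs, by the Krausz-type characterization used for the ordinary closure in Theorem \ref{thm5}(i). So $G=L(H)$; take $H=\Lp(G)$ as in Theorem \ref{thm4}.

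Next we check that $H$ contains none of $T_1,T_2,T_3$. For each $T_i\subseteq H$ we exhibit a vertex $x=L(e)$ of $G$ with $e\in E(T_i)$ at which the completion is admissible:
\begin{itemize}
\item for the diamond $T_1$, take $e$ to be the central edge;
\item for the multitriangle $T_2$, take $e$ to be one of the parallel edges;
\item for the triple edge $T_3$, take $e$ to be one of the three parallel edges.
\end{itemize}
In each case $N_G(x)$ is connected and noncomplete, because the edges of $T_i$ link the two endpoint-cliques of $e$. One then checks the $\textbf{M}$-condition for $x$, which contradicts closedness.

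\emph{Sufficiency.} Assume $G=L(H)$ with $H$ free of $T_1,T_2,T_3$, and suppose some vertex $x=L(e)$ admits an admissible completion. Then $N_G(x)$ is connected and noncomplete. In line-graph language, $N_G(x)$ is the union of the cliques at the two ends $u,v$ of $e$. Connectivity of $N_G(x)$ requires an edge of $G$ between the two cliques, that is, edges $f\ni u$ and $g\ni v$ (both different from $e$) sharing a vertex. There are three possibilities:
\begin{itemize}
\item $f,g$ meet at a third vertex $w$, giving a triangle on $e$;
\item $f$ is parallel to $e$;
\item $f=g$ is parallel to $e$.
\end{itemize}
Noncompleteness together with the specific $\textbf{M}$-admissibility condition should then force one more edge, producing a diamond, a multitriangle, or a triple edge respectively. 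This contradicts the assumption on $H$.

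The main obstacle is this last case analysis. It must match the precise admissibility condition of the $\textbf{M}$-closure, which this paper does not state, exactly with the three forbidden configurations. In particular, one has to rule out a plain triangle or a single double edge triggering a completion, since those are permitted in $\Lp(G)$. I would first handle the triangle case, showing that a bare triangle with no fourth edge yields either a complete neighbourhood or an inadmissible vertex, and then treat the parallel-edge cases the same way.
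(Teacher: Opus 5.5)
The paper gives no proof of this statement. It is quoted from \cite{RP} and used only as a black box, to exclude triple edges in the $W_i$ in the proof of Theorem~\ref{thm2}. All of its content sits in the definition of $\cl^M$, which the paper omits. Your proposal does not supply that definition either, so both directions stop exactly where the work is (``one then checks the \textbf{M}-condition'', ``should then force one more edge''). As written it is a template, not a proof. Moreover, the first step of your necessity direction is false. Let $H$ be a triangle $abc$ with pendant edges $aa',bb',cc'$. Then $L(H)$ is the $3$-sun, $H=\Lp(L(H))$ by Theorem~\ref{thm4}, and $H$ contains no $T_i$. So by the very statement you are proving, $L(H)$ is \textbf{M}-closed, yet $N(L(ab))$ induces the path $aa',ac,bc,bb'$, which is connected and noncomplete. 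This is exactly why the \textbf{M}-closure exists: the $3$-sun has no Hamilton $(L(aa'),L(bc))$-path, while its ordinary closure $K_6$ is Hamilton-connected. Your own sufficiency paragraph, which admits triangles and double edges in $\Lp(G)$, is inconsistent with that claim. The recognition step is also unsound. Theorem~\ref{thm5}(i) is not a Krausz-type criterion, and local two-clique structure does not give a global covering as in Theorem~\ref{thm11} with $r=2$. In the claw-free graph $\overline{C_7}$ every neighbourhood induces a $P_4$, i.e.\ two cliques joined by an edge. Yet $\overline{C_7}$ is not the line graph of a multigraph: every edge $uv$ of a preimage would have exactly two further edges at $u$ and at $v$, making the preimage cubic with $7$ edges, impossible as $3\nmid 14$. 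Hence $\overline{C_7}$ is not \textbf{M}-closed (Theorem~\ref{thm9}(iv)), while the $3$-sun is, and both have only $P_4$'s as noncomplete neighbourhoods. So in general admissibility cannot be decided from $\langle N(x)\rangle$ alone, which is what your sketch attempts. Fortunately this half is unnecessary: an \textbf{M}-closed $G$ equals $\cl^M(G)$, which is a line graph of a multigraph by Theorem~\ref{thm9}(iv).

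What is left is a local analysis inside line graphs, and it has to be aimed correctly. For $H=\Lp(G)$ and $e=uv$, $N_G(L(e))$ is the union of the cliques $K_u,K_v$ formed by the edges other than $e$ at $u$, resp.\ $v$. These cliques share exactly the edges parallel to $e$. The only edges between $K_u\setminus K_v$ and $K_v\setminus K_u$ are the pairs $uw,vw$ with a common third end $w$. One checks that $H$ contains some $T_i$ if and only if, for some $e$, this neighbourhood is $2$-connected and noncomplete:
\begin{itemize}
\item two disjoint joining pairs for the diamond;
\item two shared vertices for the triple edge;
\item one shared vertex plus a joining pair for the multitriangle.
\end{itemize}
So your choices of $e$ are the right ones. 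In the absence of all $T_i$, each such neighbourhood is disconnected, complete, or has a cut vertex. For $T_2$ and $T_3$, however, noncompleteness does not follow from the edges of $T_i$, as you claim. It comes from Theorem~\ref{thm4}: these edges are not pendant in $\Lp(G)$, so $L(e)$ is not simplicial. This matters, since $L(T_2)=K_4$ is complete and hence \textbf{M}-closed by Theorem~\ref{thm9}(i); the statement survives only because $\Lp(K_4)=K_{1,4}$. Consequently, by Theorem~\ref{thm9}(iv), the statement is equivalent to the following: a line graph of a multigraph is \textbf{M}-closed if and only if none of its vertices has a $2$-connected noncomplete neighbourhood. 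That equivalence is exactly what has to be extracted from the construction of $\cl^M$ in \cite{RP}, and it is the step your proposal leaves blank.
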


Now we are ready to give a proof of  Theorem \ref{thm2}. Since the proof follows a similar line of reasoning as that of Theorem \ref{thm1}, we omit some of the details.
\medskip

\noindent{\bf Proof of Theorem \ref{thm2}.}  Let $G$ be a 3-connected claw-free graph with domination number at most 4 that is not Hamilton-connected. Let $H=\Lp(\cl^M(G))$. Since $G$ is 3-connected, $H$ is essentially $3$-edge-connected. By Theorem \ref{thm6}, there exist two edges $e_1$ and $e_2$ such that the graph $H$ does not contain an $(e_1, e_2)$-IDT. To derive a contradiction, we first reduce the problem to the core of $H$. Let $e_1'$ and $e_2'$ be defined as in Definition~\ref{Defi1}.

Now, in the multigraph $\co(H)$, if $e'_1=e'_2$, we set $\widetilde{e}=e'_1=e'_2$; otherwise we subdivide the edges $e'_1,e'_2$ and join the two new vertices with an edge $\widetilde{e}$. In both cases, we denote by $\co(H)'$ the resulting multigraph. In the first case, $\co(H)'=\co(H)$ and it is $3$-edge-connected by Theorem~\ref{thm7}, and it is easy to see that in the second case, $\co(H)'$ is also $3$-edge-connected (recall that, in $\co(H)$, all pendant edges are removed and all vertices of degree 2 are suppressed).

Let $\{v_1,v_2,v_3,v_4\}$ be a dominating set of $G$. Then, by the definition of a line graph, the set of the four corresponding edges, denoted by $\{f_1, f_2, f_3, f_4\}$, dominates all edges of $H$. For $f_i\in E(H), i=1,2,3,4 $, we find edges $f'_i$ in $\co(H)$. For each $i \in [4]$, let $f'_i = w_{2i-1}w_{2i}$; that is, the endvertices of $f'_i$ are $w_{2i-1}$ and $w_{2i}$. Let $A = \{w_1, \ldots, w_8\}$. By Theorem~\ref{thm8-1}, either there exists a closed trail in $\co(H)'$ containing the vertices $w_1, \ldots, w_8$ and the edge $\widetilde{e}$, or there exists a contraction $\alpha': \co(H)' \to P$ such that $\alpha'(\widetilde{e}) = xy \in E(P)$ and $\alpha'(A) = V(P) \setminus \{x, y\}$. In the first case, we can find an $(e_1,e_2)$-IDT in $H$ (recall that the corresponding set of vertices $\lbrace w_1,\ldots,w_8\rbrace$ dominates all edges of $H$), a contradiction. So we can deduce that there is a contraction $\alpha':\co(H)'\rightarrow P$ such that  $\alpha'(\widetilde{e})=xy\in E(P)$ and $\alpha'(A)=V(P)\setminus\{x,y\}$. Let $W_i,\,i\in \{1,\ldots,8\}$ be the preimage in $\co(H)'$ that contains $w_i$ and let $W_9,W_{10}$ be the preimage of the two endvertices of $\widetilde{e}$. Since $\{w_1, \ldots, w_8\}$ dominates all edges of $\mathrm{co}(H)'$, it follows that both $W_9$ and $W_{10}$ consist of a single vertex, and that $e'_1 \neq e'_2$. Recall that the Wagner graph can be obtained from the Petersen graph by deleting an arbitrary edge and suppressing the two resulting vertices of degree $2$. Hence, the contraction $\alpha'$ can contract $\mathrm{co}(H)$ to the Wagner graph $W$.

Recall that the set ${w_1, \ldots, w_8}$ is a dominating set of $\co(H)$, and each $w_i$ belongs to the corresponding component $W_i$, so every edge of $W_i$ is dominated by $w_i$. Thus, each $W_i$ is a star (possibly with multiple edges). By Theorem~\ref{thm10}, we conclude that each $W_i$ contains no triple edges (Fig \ref{fig3}, $T_3$). Note that $|N_{\co(H)}(W_i)| = 3$ and that $w_i$ is an end-vertex of $f'_i$. Combined with the fact that the set $\{f'_1, f'_2, f'_3, f'_4\}$ dominates all edges of $\co(H)$, we conclude that $w_i$ is adjacent to vertices in other preimages. It follows that, after removing all pendant edges and pendant double edges, the number of double edges in $W_i$ is at most two. Based on the structural properties of $\co(H)$ and the fact that its domination number is 4, we conclude that $H$ belongs to the class $\mathcal{W}'$.

This completes the proof.\hfill $\Box$

\section{Proof of Theorem \ref{thm3}}
\label{proof3_sec}

\subsection{Preliminaly}

If $\mathcal{H}$ is a 3-hypergraph, then the {\it incidence graph }$IG(\mathcal{H})$ is the graph obtained from $\mathcal{H}$ by subdividing every edge with a new vertex of degree 2 and replacing every 3-hyperedge by a new vertex, adjacent to all its three vertices. The newly added vertices
will be referred to as white vertices and the original vertices of $\mathcal{H}$ as black vertices of $IG(\mathcal{H})$. Thus, in $IG(\mathcal{H})$, white vertices correspond to the edges and hyperedges of $\mathcal{H}$, and black vertices correspond to the vertices of $\mathcal{H}$. Note that every white vertex in $IG(\mathcal{H})$ has degree 2 or 3.

A {\it closed walk} $Q$ in a graph $G$ is a sequence
$$
v_0, e_0, v_1, e_1, \ldots, e_{k-1}, v_k,
$$
such that for each $0 \leq i \leq k - 1$, the edge $e_i$ connects the vertices $v_i$ and $v_{i+1}$, and $v_k = v_0$. Each vertex $v_i$ is said to be visited by $Q$, with {\it multiplicity} equal to the number of times it appears in the sequence. Similarly, an edge $e_i$ is said to be visited by $Q$ (again with possible multiplicity). A closed trail is a closed walk visiting  each edge at most once.

Let $v_i$ be a vertex visited once by the above walk. The predecessor edge of $v_i$ is defined to be $e_{i-1}$ (with subtraction  modulo $k$). Similarly, the successor edge of $v_i$ is $e_i$ if $i < k$ and $e_0$ otherwise. Given an arbitrary set $W$ of vertices of degree $2$ or $3$ in $G$, a closed  $W$-quasitrail in $G$ is a closed walk which traverses  each edge at most twice, and if an edge $e$ is traversed twice, then it has an endvertex $w\in W$ such that $w$ is visited once  and $e$ is both the predecessor edge and the successor edge of $w$. A closed $W$-quasitrail in $G$ is dominating if it visits at least  one vertex in every edge of $G$.

We will use the following characterization of $3$-hypergraphs with Hamiltonian line graphs, which follows from  Corollary 7 in \cite{LORV}.

\begin{theorem}{\rm (Li, Ozeki, Ryj\'{a}\v{c}ek, Vr\'{a}na \cite{LORV}) } \label{LORV}
Let $\mathcal{H}$ be a $3$-hypergraph and let $W$ be the set of white vertices of its incidence graph $IG(\mathcal{H})$. The line graph $L(\mathcal{H})$ of $\mathcal{H}$ is Hamiltonian if and only if $IG(\mathcal{H})$ contains a dominating closed W-quasitrail.
\end{theorem}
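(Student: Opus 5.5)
The plan is to prove the two directions separately, going directly between the cyclic order of a Hamilton cycle in $L(\mathcal{H})$ and a closed walk in $IG(\mathcal{H})$ that alternates between white and black vertices. The vertices of $L(\mathcal{H})$ are the (hyper)edges of $\mathcal{H}$, that is, the white vertices of $IG(\mathcal{H})$. Two of them are adjacent in $L(\mathcal{H})$ if and only if they have a common black neighbour in $IG(\mathcal{H})$.

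\emph{Forward direction.} Let $h_1h_2\cdots h_mh_1$ be a Hamilton cycle of $L(\mathcal{H})$, and write $w_i$ for the white vertex of $h_i$.
\begin{itemize}
\item For each $i$ (indices modulo $m$), choose a black vertex $v_i\in h_i\cap h_{i+1}$. Consider the closed walk $w_1,v_1,w_2,v_2,\ldots,w_m,v_m,w_1$ in $IG(\mathcal{H})$.
\item Every white vertex is visited exactly once. Hence every edge of $IG(\mathcal{H})$ has a visited endvertex, and the walk is dominating.
\item An edge $w_iv$ can only be traversed at the unique visit of $w_i$, as its predecessor or successor edge. So it is traversed at most twice.
\item It is traversed twice exactly when $v_{i-1}=v_i=v$. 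In that case it is both the predecessor and the successor edge of the once-visited vertex $w_i\in W$.
\item Therefore the walk is a dominating closed $W$-quasitrail.
\end{itemize}

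\emph{Converse direction.} Let $Q$ be a dominating closed $W$-quasitrail.
\begin{itemize}
\item Each white vertex $w$ has degree at most $3$. Each visit of $w$ uses two edge-traversals at $w$, and edges are used at most twice, with double use only in the detour pattern $b,w,b$ at a once-visited $w$. Counting traversals shows that $w$ is visited at most once.
\item List the visited white vertices $w_1,\ldots,w_k$ in the order in which $Q$ meets them. Since $IG(\mathcal{H})$ is bipartite, consecutive visited whites $w_j,w_{j+1}$ are separated in $Q$ by exactly one black vertex $b_j$, possibly with $b_{j-1}=b_j$ for a detour. So $w_jw_{j+1}$ is an edge of $L(\mathcal{H})$, and $w_1\cdots w_kw_1$ is a closed sequence of pairwise distinct, consecutively adjacent vertices of $L(\mathcal{H})$.
\item Take an unvisited white vertex $u$. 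Domination forces every black neighbour of $u$ to be visited. Pick one such neighbour $b$ and a visit of $b$, which lies between some $w_j$ and $w_{j+1}$ with $b=b_j$.
\item All hyperedges containing $b$ form a clique in $L(\mathcal{H})$. So we can insert $u$, and all other unvisited whites assigned to the same visit of $b$, between $w_j$ and $w_{j+1}$.
\item Doing this for every unvisited white vertex yields a Hamilton cycle of $L(\mathcal{H})$.
\end{itemize}

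I expect the main obstacle to be the bookkeeping in the converse direction rather than any deep idea. The step needing the most care is the claim that each white vertex is visited at most once, which is where the degree bound $\le 3$ and the precise definition of a quasitrail enter. The degenerate cases also need separate treatment:
\begin{itemize}
\item When $Q$ visits at most one white vertex, or only one black vertex, every hyperedge contains a common black vertex. Then $L(\mathcal{H})$ is complete, and it is Hamiltonian once it has at least three vertices. This is the standing convention, analogous to $|E(H)|\ge 3$ in Theorem~\ref{thm6}.
\item When $k=2$, the two visited whites must be joined through a black vertex at each end, and the insertion step then closes up a genuine cycle provided $|V(L(\mathcal{H}))|\ge 3$.
\end{itemize}
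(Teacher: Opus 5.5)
Your proof is correct. It necessarily takes a different route from the paper, which gives no argument for this statement and simply imports it as a consequence of Corollary~7 of \cite{LORV}.

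You translate directly between the two objects. In the forward direction, threading a black vertex of $h_i\cap h_{i+1}$ between consecutive hyperedges gives an alternating closed walk that visits every white vertex exactly once. Its only doubled edges are permitted detours. In the converse direction, the decisive observation is that a doubled edge $wb$ can only be justified by its white end, since black vertices are not in $W$. So a white vertex visited $t\ge 2$ times would need $2t\ge 4$ incident edges, which is impossible for rank at most $3$. Hence $Q$ induces a cyclic sequence of distinct, consecutively intersecting hyperedges. Domination then lets you splice each unvisited hyperedge into the clique of hyperedges through one of its (necessarily visited) black vertices.

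Compared with the citation, your route buys several things:
\begin{itemize}
\item It is elementary and self-contained, and it shows exactly where rank $\le 3$ is used.
\item It shows that one may even require the quasitrail to visit all white vertices.
\item It exposes a hypothesis the printed statement omits. If $\mathcal{H}$ consists of two hyperedges through a common vertex $b$, then $b,w_1,b,w_2,b$ is a dominating closed $W$-quasitrail, while $L(\mathcal{H})=K_2$ is not Hamiltonian. So $|E(\mathcal{H})|\ge 3$ (as in Theorem~\ref{thm6}) is genuinely needed, as you flag.
\end{itemize}
The citation buys only brevity.

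One cosmetic point: the paper's definition counts the endpoint $v_0=v_k$ of a closed walk twice. So either start your forward walk at a black vertex, or state explicitly that visits are counted cyclically.
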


\begin{figure}[!ht]
	\centering
	\includegraphics[width=140mm]{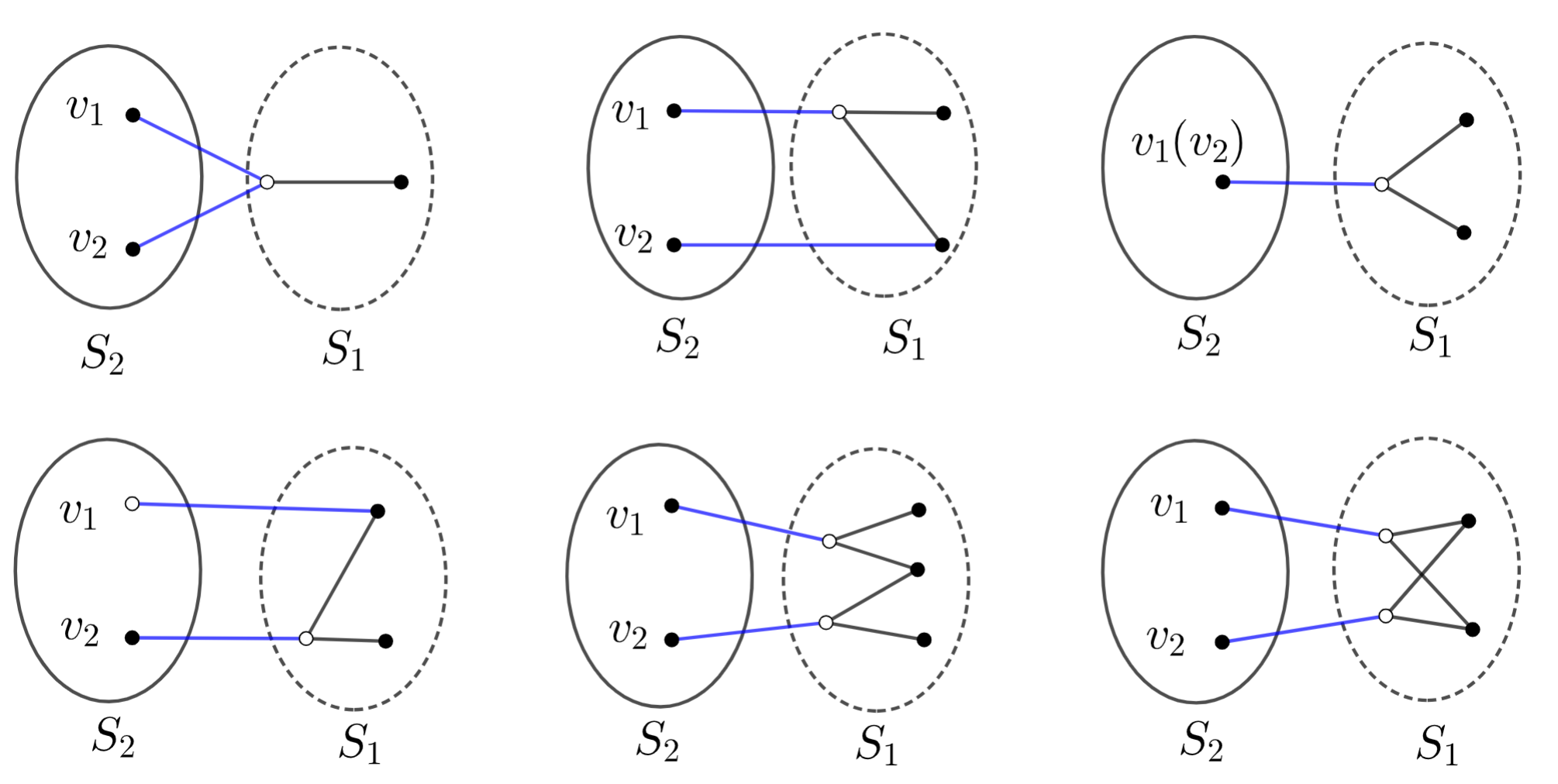}
	\caption{Non-essentially edge cut in $IG'(H)$. }
	\label{fig4}
\end{figure}

\subsection{Proof of Theorem \ref{thm3}}

Now we are ready to give a proof of Theorem \ref{thm3}.
\medskip

\noindent{\bf Proof of Theorem \ref{thm3}.} Let $\mathcal{H}$ be a 3-hypergraph whose line graph $L(\mathcal{H})$ is a $3$-connected graph with domination number at most $4$. We aim to translate the problem from the hypergraph $\mathcal{H}$ to its incidence graph $IG(\mathcal{H})$. Let $IG'(\mathcal{H})$ be the graph obtained from $IG(\mathcal{H})$ by contracting each white vertex with degree 2.  Suppose that $IG'(\mathcal{H})$ is not essentially 3-edge-connected. Then there exists a minimal edge cut $X' \subseteq E(IG'(\mathcal{H}))$ with $|X'| < 3$ such that $IG'(\mathcal{H}) - X'$ has at least two nontrivial components, say $S_1$ and $S_2$.

Note that each edge of $IG'(\mathcal{H})$ corresponds to either a hyperedge or an edge of $\mathcal{H}$. Let $X \subseteq E(\mathcal{H})$ be the set of hyperedges or edges in $\mathcal{H}$ corresponding to the edges in $X'$. Since $|X'| < 3$, we have $|X| < 3$. Now consider the hypergraph $\mathcal{H} - X$. If $\mathcal{H} - X$ has two or more nontrivial components, then in the line graph $L(\mathcal{H})$, the vertices corresponding to the hyperedges in these components would be separated by the vertex cut $X$, contradicting the assumption that $L(\mathcal{H})$ is 3-connected. Therefore, $\mathcal{H} - X$ has exactly one nontrivial component. That is to say, either $S_1$ or $S_2$ satisfies the following condition: every edge in that component corresponds to a hyperedge of $\mathcal{H}$ that belongs to $X$. Without loss of generality, assume that $S_1$ satisfies this condition. See the example in Fig. \ref{fig4}.

Let $\{v_1, v_2\} \subseteq V(X')$ be the two vertices lying in $S_2$ (See Fig. \ref{fig4}). By deleting all the vertices in $S_1 $, and adding a new edge between $v_1$ and $v_2$ (assuming that $v_1$ and $v_2$ are not adjacent), we obtain a new graph, denoted by $IG'(\mathcal{H})^1$. If $IG'(\mathcal{H})^1$ is still not essentially 3-edge-connected, we repeat this procedure. Since in each step, the number of edge cuts of size less than 3 strictly decreases, this process must terminate after finitely many steps. Let $IG'(\mathcal{H})^s$ denote the resulting graph. Then $IG'(\mathcal{H})^s$ is essentially 3-edge-connected.

For each edge $e$ that has been removed during the construction of $IG'(\mathcal{H})^s$, let $e^0$ be the edge or hyperedge that contains  $v_1$ and $v_2$. If $v_1 = v_2$, define $e^0$ to be any edge incident to $v_1$. In order to apply Theorem \ref{thm8}, we  convert the problem to the core of $IG'(\mathcal{H})^s$.  For convenience, we denote $\co(IG'(\mathcal{H})^s)$ by $\widetilde{\mathcal{H}}$ in the following discussion.

Let $\{u_1, u_2, u_3, u_4\}$ be a dominating set of $G = L(\mathcal{H})$. By the definition of the line graph of a hypergraph, there exist four edges or hyperedges $e_1, e_2, e_3, e_4$ in $\mathcal{H}$
%$\{e_1, e_2, e_3, e_4\} \subseteq E(\mathcal{H})$ 
corresponding to $u_1, u_2, u_3, u_4$, respectively, such that the set $\{e_1, \ldots, e_4\}$ dominates all edges and hyperedges of $\mathcal{H}$. Let $\{e'_1, \ldots, e'_4\}$ be the hyperedges (or edges) in $\widetilde{\mathcal{H}}$ corresponding to $\{e_1, e_2, e_3, e_4\}$. Let $\{z_1, \ldots, z_t\}$ denote the set of vertices in $\widetilde{\mathcal{H}}$ that are incident with the white vertices representing $\{e'_1, \ldots, e'_4\}$. If some $e'_i$ is an edge rather than a hyperedge, then $z_j$ refers to one of its endvertices. Thus, the vertex set $\{z_1, \ldots, z_t\} \subseteq V(\widetilde{\mathcal{H}})$ dominates all edges of $\widetilde{\mathcal{H}}$. Since each hyperedge contains three vertices and there are at most four such hyperedges, it follows that $t \leq 12$.

If there exists a closed trail $R$ in $\widetilde{\mathcal{H}}$ that passes through all of $\{z_1, \ldots, z_t\}$, then it is straightforward to check that for every construction of $e_i',\,i=1,2,3,4$, we can find a closed trail $R'$ in $IG(\mathcal{H})$. Our goal is to apply Theorem  \ref{LORV} to deduce that this trail $R'$ corresponds to a Hamilton cycle in $L(\mathcal{H})$. Let $W$ be the set of white vertices in $IG(\mathcal{H})$. We now construct a closed dominating $W$-quasitrail from $R'$.

Consider a white vertex $w$ of $IG(\mathcal{H})$ that is not visited by the trail $R'$. Since the set $\{z_1, \ldots, z_t\}$ dominates all hyperedges and deges of $\mathcal{H}$, it follows that every such white vertex $w$ is adjacent to at least one vertex $z_i \in \{z_1, \ldots, z_t\}$, and by construction, each $z_i$ is visited by $R'$. To include $w$ is in the walk, we modify $R'$ by inserting a short detour at one of the occurrences of $z_i$: namely, we replace an occurrence of $z_i$ in $R'$ with the sequence $z_i, z_iw, w, wz_i, z_i$, which adds the white vertex $w$ and the two incident edges $z_iw$ and $wz_i$ into the trail without disrupting its closed nature. By repeating this detour operation for every white vertex $w$ not appearing on $R'$ (selecting any adjacent $z_i \in \{z_1, \ldots, z_t\}$ for the detour), we obtain a closed walk in $IG(\mathcal{H})$ that visits all white vertices. Since the white vertices of $IG(\mathcal{H})$ correspond to all hyperedges and edges of $\mathcal{H}$, the resulting walk dominates all edges of $IG(\mathcal{H})$. This final walk is thus a closed dominating $W$-quasitrail in $IG(\mathcal{H})$. By Theorem \ref{LORV}, this implies that the line graph $L(\mathcal{H})$ is Hamiltonian.

Now, assuming that no closed trail $R$ exists in $\widetilde{\mathcal{H}}$ that passes through all of $\{z_1, \ldots, z_t\}$, it follows from Theorem \ref{thm8} that 
the core $\widetilde{\mathcal{H}}$ can be contracted to the Petersen graph such that the preimage of each vertex in the Petersen graph,
and hence $10 \leq t \leq 12$. Then the edge set $\{e_1, e_2, e_3, e_4\}$ must contain at least two hyperedges. %By Theorem \ref{thm8}, contains at least one vertex from $\{z_1, \ldots, z_t\}$. 
In particular, we may assume that $z_1, z_2, z_3$ are the endvertices of the hyperedge $e_1$ and are mapped to three vertices $p_1, p_2, p_3$ in the Petersen graph, respectively.

Let $w_1$ be the white vertex in $\widetilde{\mathcal{H}}$ corresponding to the hyperedge $e_1$.
By the definition of contraction, 
either 
$w_1$ forms the preimage of a vertex other than $p_1,p_2,p_3$
or
$w_1$ is contained in the preimage of $p_i$ for some $i \in \{1,2,3\}$.
For the former case,
the preimage consisting of $w_1$ contains no vertex in $\{z_1, \ldots, z_t\}$,
a contradiction.
Thus, we may assume the latter occurs.

By the symmetry, we may assume that 
$w_1$ is contained in the preimage of $p_1$,
and let $W_1 \subseteq V(\widetilde{\mathcal{H}})$ be the preimage of $p_1$.
Since $z_2$ and $z_3$ are adjacent to $w_1$ in $\widetilde{\mathcal{H}}$,
$p_2$ and $p_3$ are the neighbors of $p_1$ in the Petersen graph.
Let $p_4$ be the unique neighbor of $p_1$ 
%in the Petersen graph,
other than $p_2$ and $p_3$.
Thus, there exists an edge $f_4$ in $\mathcal{H}$ 
corresponding to the edge $p_1p_4$ in the Petersen graph.
However,
$\{w_1,f_4\}$ forms an essential $2$-edge-cut in $\mathcal{H}$
that corresponds to a $2$-cut in $G = L(\mathcal{H})$,
a contradiction.
%
\if0
the vertices $p_1, p_2, p_3$ must either form a path of length 3 or an independent set adjacent to a common vertex $p_4$. The first case is impossible: if $p_1, p_2, p_3$ form a path of length $3$ in the Petersen graph, since $L(\mathcal{H})$ is 3-connected, the degree of $z_2$  in $\widetilde{\mathcal{H}}$ is at most $2$, contradicting the assumption that  $\widetilde{\mathcal{H}}$ is the core of $IG'(\mathcal{H})^s$, Therefore, only the second case can occur.

Since every preimage of the Petersen graph must contain at least one endvertex of $\{e_1,e_2, e_3, e_4\}$, we have that the preimage of $p_4$, the common neighbor of $p_1, p_2, p_3$, must contain a hyperedge among $\{e_2, e_3, e_4\}$. However, since the images of three vertices in hyperedge cannot form a path, there exists a vertex in the Petersen graph whose preimage does not contain an endvertex of $\{e_1, e_2, e_3,e_4\}$, a contradiction.
\fi
This completes the proof.\hfill $\Box$

\subsection{Best possibility of Theorem \ref{thm3}}

In Theorem \ref{thm3}, we established that  every $3$-connected line graph of a $3$-hypergraph with domination number at most $4$ is Hamiltonian. This result is sharp: for instance, the line graph of a graph obtained from the Petersen graph by attaching at least one pendant edge to each of its vertices is a counterexample when the domination number equals 5, see Theorem \ref{thm1}.

A {\it clique covering} of the graph $G$ is a family $\mathcal{K}$ of cliques of $G$ such that every vertex and every edge of $G$ belongs to some clique from $\mathcal{K}$. The following result characterizes graphs that are line graphs of an $r$-hypergraph.

\begin{theorem}{\rm (Berge \cite{B} and Skums,  Suzdal,  Tyshkevich \cite{SST})\label{thm11}}
For every integer $r \geq 2$, a graph $G$ is a line graph of an $r$-hypergraph if and only if $E(G)$ can be covered by a system of cliques $\mathcal{K}$ such that every vertex of $G$ is in at most $r$ cliques of $\mathcal{K} $.
\end{theorem}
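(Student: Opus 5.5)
The plan is to prove both directions directly, using that in the line graph of a hypergraph the vertices are the hyperedges and two of them are adjacent exactly when the hyperedges intersect. In this correspondence, the cliques of the cover play the role of the vertices of the hypergraph.

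For the forward direction, let $G=L(\mathcal{H})$ with $\mathcal{H}$ an $r$-hypergraph. For every vertex $v\in V(\mathcal{H})$, let $K_v$ be the set of vertices of $G$ whose hyperedges contain $v$. Any two such hyperedges share $v$, so $K_v$ is a clique of $G$. Every edge $ef$ of $G$ comes from some common vertex $v\in e\cap f$, so it lies in $K_v$. Hence $\mathcal{K}=\{K_v: v\in V(\mathcal{H}),\ K_v\neq\emptyset\}$ covers $E(G)$. A vertex $e$ of $G$ lies in $K_v$ exactly for $v\in e$, so it lies in $|e|\le r$ cliques, as required.

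For the converse, let $\mathcal{K}$ be a clique system covering $E(G)$ with every vertex in at most $r$ cliques. First I add the singleton clique $\{x\}$ for every vertex $x$ lying in no clique of $\mathcal{K}$. Such an $x$ is isolated, since every edge is covered, so it now lies in exactly $1\le r$ clique and the degree condition is preserved. Then I define $\mathcal{H}$ with $V(\mathcal{H})=\mathcal{K}$. Each vertex $x\in V(G)$ becomes the hyperedge $e_x=\{K\in\mathcal{K}: x\in K\}$, which is nonempty and has size at most $r$. Repeated hyperedges are allowed, since $E(\mathcal{H})$ is a collection of not necessarily distinct subsets. Two hyperedges $e_x,e_y$ with $x\ne y$ intersect iff some clique contains both $x$ and $y$. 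A clique containing both forces $xy\in E(G)$. Conversely, $xy\in E(G)$ is covered by some clique, which then contains both endpoints. Therefore $x\mapsto e_x$ is an isomorphism $G\cong L(\mathcal{H})$, and $\mathcal{H}$ has rank at most $r$.

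The only real obstacle is bookkeeping rather than mathematics. One point is making sure no hyperedge is empty, which the singleton cliques handle. The other is that the construction may produce parallel hyperedges, which the paper's definition of hypergraph permits. No earlier result of the paper is needed.
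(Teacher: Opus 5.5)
Your proof is correct and complete. The paper gives no proof of this statement to compare against: it quotes the result from Berge and from Skums, Suzdal and Tyshkevich, and uses it only to certify that $K_{1,2,3}$ is the line graph of a $3$-hypergraph. Your argument is the classical duality proof, in which the vertices of $\mathcal{H}$ correspond to the cliques of the cover and the hyperedges correspond to the vertices of $G$.

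The bookkeeping is handled correctly in both directions. In the forward direction, coincidences $K_v=K_{v'}$ can only lower the count below $|e|\le r$. Your family also covers every vertex of $G$, so it is a clique covering even in the paper's stronger sense.

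In the converse, the singleton cliques keep each $e_x$ nonempty. Reading ``$r$-hypergraph'' as ``rank at most $r$'' matches the paper's remark that every graph is a $3$-hypergraph. If exact rank $r$ were demanded, you could pad one hyperedge with new vertices, which changes no intersection.

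Your use of repeated hyperedges is genuinely needed, not merely permitted. For $r=2$, cover $K_5-e$ by two copies of $K_4$, so that every vertex lies in at most two cliques. Your construction then gives the same hyperedge to the three vertices of degree $4$. A $2$-hypergraph with pairwise distinct hyperedges has the same line graph as a simple graph: replace each singleton $\{v\}$ by a pendant edge. Since $K_5-e$ is not the line graph of a simple graph, the equivalence would fail if hyperedges had to be distinct.
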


\begin{figure}[!ht]
	\centering
	\includegraphics[width=100mm]{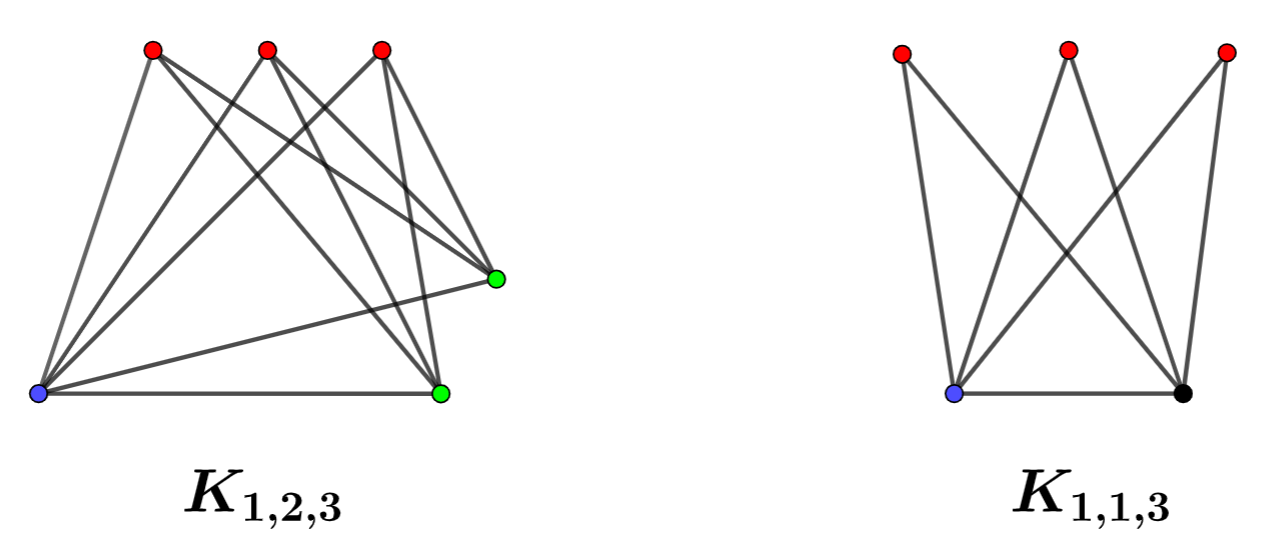}
	\caption{The graphs $K_{1,2,3}$ and $K_{1,1,3}$. }
	\label{fig5}
\end{figure}

Let $K_{1,2,3}$ denote the complete tripartite graph with partite sets of sizes $1$, $2$, and $3$, respectively (see Figure~\ref{fig5}). It is easy to verify that $K_{1,2,3}$ is 3-connected but not Hamilton-connected. By Theorem~\ref{thm11}, $K_{1,2,3}$ is the line graph of a 3-hypergraph. This naturally raises the question of how Hamilton-connectedness is related to the domination number in 3-connected line graphs of 3-hypergraphs. The example of $K_{1,2,3}$ shows that even when the domination number is $1$, the 3-connected line graph of a  3-hypergraph may fail to be Hamilton-connected.
Moreover, by replacing the red vertex in $K_{1,2,3}$ (see Figure~\ref{fig5}) with a clique and connecting every vertex of this clique to all vertices in the size-1 and size-2 parts of $K_{1,2,3}$, one can construct infinitely many larger counterexamples.  Similarly, the graph $K_{1,1,3}$ illustrates that a 2-connected line graph of a 3-hypergraph with domination number $1$ may not be Hamiltonian. Again, replacing the red vertex in $K_{1,1,3}$ with a clique yields an infinite family of such non-Hamiltonian graphs.

\section*{Acknowledgement} Kenta Ozeki was supported by JSPS KAKENHI, Grant Numbers 22K19773 and 23K03195. Leilei Zhang's work was supported by JSPS KAKENHI,  Grant Number 25KF0036, the NSF of Hubei Province Grant Number 2025AFB309,  the China Postdoctoral Science Foundation  Grant Number 2025M773113, the Fundamental Research Funds for the Central Universities, Central China Normal University Grant Number CCNU24XJ026.
	
\section*{Declaration}
	
\noindent$\textbf{Conflict~of~interest}$
The authors declare that they have no known competing financial interests or personal relationships that could have appeared to influence the work reported in this paper.
	
\noindent$\textbf{Data~availability}$
Data sharing not applicable to this paper as no datasets were generated or analysed during the current study.

\end{document}